\theoremstyle{plain}
\newtheorem{theorem}{Theorem}[section]
\newtheorem{lemma}[theorem]{Lemma}
\theoremstyle{definition}
\newtheorem{definition}[theorem]{Definition}
\theoremstyle{remark}
\numberwithin{equation}{section}
\begin{document}

\title[Three term relations for a class of bivariate polynomials]{Three term relations for 
a class of \\ bivariate orthogonal polynomials}

\author[M. Marriaga, T. E. P\'erez and M. A.  Pi\~nar]
{Misael Marriaga, Teresa E. P\'erez{\textsuperscript{*}} and Miguel A.  Pi\~nar}
\address[M. Marriaga]{Department of Mathematics.
University Carlos III de Madrid. 28911. Legan\'es (Madrid), Spain}
\email{mmarriag@math.uc3m.es}
\address[T. E. P\'erez, M. A. Pi\~nar]{Math Institute of the University of Granada -- IEMath-GR \& 
Department of Applied Mathematics.  
University of Granada. 18071. Granada, Spain}
\email{tperez@ugr.es, mpinar@ugr.es}

\begin{abstract}
We study matrix three term relations for orthogonal polynomials in two variables 
constructed from orthogonal polynomials in one variable. Using the three term 
recurrence relation 
for the involved univariate orthogonal polynomials, the
explicit expression for the matrix coefficients in these three term 
relations are deduced. These matrices are diagonal or tridiagonal with entries
computable from the one variable coefficients in the respective three 
term recurrence relation. Moreover, some interesting particular cases are considered.
\end{abstract}

\subjclass[2010]{33C50, 42C05}

\keywords{Bivariate orthogonal polynomials, three term relations}

\thanks{This work has been partially supported by MINECO of Spain and the European 
Regional Development Fund (ERDF) through grant 
MTM2014--53171--P, and Jun\-ta de Andaluc\'{\i}a grant P11--FQM--7276 and Research Group FQM--384.}

\thanks{*\,\, Corresponding author. E--mail: \texttt{tperez@ugr.es}}
\date{\today}

\maketitle

\section{Introduction}

In 1975, T. Koornwinder (\cite{Koor75}) studied examples of two variables analogues of the Jacobi 
polynomials, and introduced seven classes of orthogonal polynomials in two variables obtained from 
Jacobi weights. All seven classes of orthogonal polynomials are eigenfunctions of two commuting and
algebraically independent partial differential operators $D_1$ and $D_2$, where $D_1$ has order two, 
and $D_2$ may have any arbitrary order, Koornwinder considered those orthogonal polynomials as 
{\it two--variable analogues of the Jacobi polynomials}. The examples studied by 
Koornwinder included four families of bivariate orthogonal polynomials that could be expressed in
terms of Jacobi polynomials, namely, the tensor product of Jacobi polynomials in one variable, 
classical orthogonal polynomials on the unit disk, classical orthogonal polynomials on the simplex, 
and a class of orthogonal polynomials on the parabolic biangle. Using an interesting tool previously
introduced by C. A. Agahanov (\cite{Ag65}), Koornwinder constructed bases of orthogonal polynomials 
in two variables from univariate orthogonal polynomials. This tool can be used to construct 
orthogonal polynomials bases associated with a particular class of weight functions defined on 
either bounded or unbounded domains (see \cite{FPP12}).

Our first goal in this paper was to extend the Agahanov construction to orthogonal polynomials
defined from quasi--definite moment functionals. This objective was achieved using a similar
construction to that given by Kwon, Lee, and Littlejohn in \cite{KLL01}. In this paper the authors 
studied the classical bivariate orthogonal polynomials considered by Krall and Sheffer  in \cite{KS67}.
In fact, Kwon, Lee, and Littlejohn proved that eight of the nine classes in \cite{KS67} fit in the
Agahanov construction, some of them are orthogonal with respect to a non--positive definite
moment functional.

As it is well known, orthogonal polynomials in two variables satisfy a three term relation in 
each variable (\cite{DX14}). If these three term relations are written in vector form, they consist on two
three term relations with matrix coefficients. Naturally, the structure of the matrices depend on
the particular choice of the bivariate orthogonal polynomials. In this 
paper, we study the three term relations of a general family of orthogonal polynomials constructed 
by means of an extension of the Agahanov method, and determine 
the structure of the matrix coefficients. Using the three term recurrence relation for the involved 
univariate orthogonal polynomials, we deduce the explicit expressions of the matrix entries for the examples 
of two variable analogues of the Jacobi polynomials studied by Koornwinder. Also, we provide
the matrix coefficients for the three term recurrence relations in some of the non--positive
definite classical orthogonal polynomials in \cite{KS67}. The coefficients in the ball, simplex
ans squared cases were obtained by Y. Xu in \cite{X94} by using Jacobi orthonormal polynomials.

The structure of this paper is as follows. In section 2, preliminaries and definitions on 
orthogonal polynomial systems 
in two variables relevant to this paper will be given. In section 3, the method introduced by Agahanov 
to generate orthogonal polynomials systems in two variables from orthogonal polynomials 
sequences in one variable will be discussed and extended to quasi--definite moment functionals. 
In section 4 we comment the three term relations for 
polynomials in two variables and we elaborate the details of our main results. Finally, section 
5 contains as illustrative examples four instances of Jacobi polynomials in two variables included in
Koornwinder's work, as well as a non--positive definite example studied in \cite{KLL01}. 
All the necessary formulae for Jacobi, Laguerre and Bessel polynomials have been reunited in one
appendix at the end of the paper.

\bigskip


\section{Basic theory}

First, we will establish the main notations and results that we will use later. Chihara's (\cite{Ch78})
and Sze\H o (\cite{Sz78}) books are the main reference for this part.

Let us denote by $\Pi$ the linear space of univariate polynomials with real coefficients, and let
$u$ be a linear functional defined by means of its moments. For a given sequence of real numbers
$\{\mu_n\}_{n\ge 0}$ we define $\langle u, x^n\rangle = \mu_n$, for $n\ge0$, and extend it
by linearity to all polynomials. We will call say that $u$ a \emph{univariate moment functional}.

A sequence of univariate polynomials $\{p_n\}_{n\ge0}$ 
is an \emph{orthogonal polynomial sequence} associated with a moment functional $u$ if $\deg p_n = n$, 
for $n\ge 0$, and 
$$\langle u, p_n\,p_m\rangle = h_n\,\delta_{n,m}, \quad n,m\ge 0,$$
with $h_n\neq 0, \,\, n\ge 0$.  

The moment functional $u$ is called quasi--definite if there exists an orthogonal polynomial sequence
(OPS) associated with $u$ unique up to a normalizing constant. 

Moreover, 
if $\langle u, p^2\rangle >0$, for all $p(x)\in\Pi$, $p\neq 0$, then $u$ is called \textit{positive 
definite}. Of course, positive--definite implies quasi--definite. 
In this situation, we can obtain a sequence of real \emph{orthonormal} polynomials, that is,
$$
\langle u, p_n\,p_m\rangle = \delta_{n,m}, \quad n,m\ge 0.
$$

A quasi--definite moment functional is called \emph{symmetric} if $\langle u, x^{2n+1}\rangle =0$, 
for $n\ge 0$. If $\{p_n\}_{n\ge0}$ is an \emph{orthogonal polynomial
sequence} associated with a symmetric moment functional $u$, then polynomials are even or odd 
functions according with the parity of their indices. 

Sometimes in this paper, we will be interested in positive--definite moment functionals 
that can be
represented by means of a weight function $w(x)$ over a real interval $[a,b]$ such as
$$
\langle u, p(x)\rangle = \int_a^b \,p(x)\,w(x)\,dx, \quad \forall p(x)\in\Pi.
$$

The {\it left multiplication} of a polynomial $q\in \Pi$ by $u$ is the moment functional 
$q\,u$ satisfying 
$$
\langle q \, u, p \rangle = \langle u, q\, p \rangle,\quad \forall p\in \Pi.
$$

\bigskip

Now, some basic theory of bivariate orthogonal polynomials is introduced for its use 
in the sequel. We follow mainly \cite{DX14}.

For $n\ge 0$, let $\Pi^2_n$ denote the linear space of real polynomials in two variables of total degree not 
greater than $n$, and let $\Pi^2=\bigcup_{n\ge 0} \Pi^2_n$ the collection of all bivariate real polynomials.

A useful tool in the theory of orthogonal polynomials in several variables is the representation of a 
basis of polynomials as a {\it polynomial system} (PS).

\begin{definition}
A {\emph polynomial system (PS)} is a sequence of column vectors of increasing size $n+1$, 
$\{\mathbb{P}_n\}_{n \ge 0}$, whose entries are independent polynomials of total 
degree $n$ 
\begin{equation}\label{**}
\mathbb{P}_n = \mathbb{P}_n(x,y) = \Big( P_{n,0}(x,y), P_{n,1}(x,y), \ldots,P_{n,n}(x,y) \Big)^t.
\end{equation}
\end{definition}

Observe that, for $n\ge 0$, the entries in $\{\mathbb{P}_0, \mathbb{P}_1, \dots, \mathbb{P}_n \}$ form a 
basis of $\Pi_n^2$.

\bigskip

Let us return to moment functionals and orthogonality in two variables. For a given sequence of 
real numbers $\left\{\omega_{h,k}\right\}_{h,k\ge0}$, a moment 
functional $w$ is defined by means of its moments
$$\langle w, x^h\,y^k \rangle = \omega_{h,k},
$$
and extended by linearity to all bivariate polynomials.

\bigskip

Throughout this paper, we will denote by $\mathcal{M}_{h \times k}(\mathbb{R})$, respectively 
$\mathcal{M}_{h \times k}(\Pi^2)$, the linear space of matrices 
of size $h\times k$ with real entries, resp. with polynomial entries, 
and the notation will be simplified when $h=k$ as 
$\mathcal{M}_{h}$.
Given a matrix $M\in \mathcal{M}_{h \times k}$ 
we will denote its transpose by $M^t$. As usual, we will say
that $M\in\mathcal{M}_h$ is non--singular (or invertible) if $\det M\neq 0$, and symmetric if 
$M^t = M$. 
Moreover, $I_h$ will denote the identity matrix of size $h$, and we will omit the subscript when the size 
is clear from the context.

The {\it action of $u$ on a polynomial matrix} is defined by
$$
\langle u, M \rangle =\left(\langle u, m_{i,j}(x)\rangle
\right)_{i,j=1}^{h,k}\in {\mathcal M}_{h\times k}(\mathbb{R}),$$
where $M=\left(m_{i,j}(x)\right)_{i,j=1}^{h,k} \in {\mathcal M}_{h\times
k}(\Pi^2)$.

\bigskip

Some basic operations acting on a bivariate moment functional are described now. The
{\it action of $w$ on a polynomial matrix} is defined by
$$
\langle w, M \rangle =\left(\langle w, m_{i,j}(x,y)\rangle
\right)_{i,j=1}^{h,k}\in {\mathcal M}_{h\times k}(\mathbb{R}),$$
where $M=\left(m_{i,j}(x,y)\right)_{i,j=1}^{h,k} \in {\mathcal M}_{h\times
k}(\Pi^2),$ and the {\it left multiplication} of a polynomial $q\in \Pi^2$ by $w$ 
is defined as follows 
$$\langle q \, w, p \rangle = \langle w, q\, p \rangle,\quad \forall p\in \Pi^2.
$$

\bigskip

We will say that a polynomial system $\{\mathbb{P}_n\}_{n \ge 0}$ is \emph{orthogonal (OPS)} with respect
to $w$ if 
$$
\langle w, \mathbb{P}_n\, \mathbb{P}_m^t\rangle = \left\{ \begin{array}{ll}
\mathtt{0} \in\mathcal{M}_{(n+1)\times(m+1)}(\mathbb{R}) & \quad m\ne n,\\
H_n \in\mathcal{M}_{n+1}(\mathbb{R})& \quad m=n, \end{array} \right.$$
where $H_n$ is a symmetric and non--singular matrix of size $n+1$. 

When $H_n$ is a diagonal matrix, we say that the OPS is a \emph{mutually orthogonal 
polynomial system}. 

A bivariate moment functional $w$ is quasi--definite if there exists an orthogonal polynomial system
associated with $w$. We must remark that in the bivariate case, OPS are not unique. 

As in the univariate
case, $w$ is called \emph{positive definite} if $\langle w, p^2\rangle > 0$ for all $p\in\Pi^2$, $p\neq 0$.
If $w$ is positive definite, then it is quasi--definite, and we can construct \emph{orthonormal
polynomial systems} satisfying $H_n=I_{n+1}$, $\forall n\ge 0$.

\bigskip

Orthogonal polynomials in two variables satisfy a three term relation in each variable
(\cite{DX14}). These three term relations are written in vector form and have matrix coefficients.

\begin{theorem}[\cite{DX14}, p. 70]
Let $\{\mathbb{P}_n\}_{n\ge 0}$ be an OPS associated with the bivariate moment functional $w$. 
For $n\ge 0$, there exist matrices $A_{n,i}$ of size $(n+1)\times (n+2)$, $B_{n,i}$ of 
size $(n+1)\times (n+1)$, and $C_{n,i}$ of size $(n+1)\times n$, for $i=1,2$, such that
\begin{eqnarray}
x \, \mathbb{P}_n &= A_{n,1}\,\mathbb{P}_{n+1} + B_{n,1}\,\mathbb{P}_n + C_{n,1}\,\mathbb{P}_{n-1}, 
\label{r3t-i1}\\
y \, \mathbb{P}_n &= A_{n,2}\,\mathbb{P}_{n+1} + B_{n,2}\,\mathbb{P}_n + C_{n,2}\,\mathbb{P}_{n-1},
\label{r3t-i2}
\end{eqnarray}
where $\mathbb{P}_{-1}=0$ and $C_{-1,i}=0$, $i=1,2$, and 
$$
\begin{array} {lllllll}
A_{n,1}\,H_{n+1} & = & \langle w, x\,\mathbb{P}_n\,\mathbb{P}_{n+1}^t\rangle, & \quad & 
A_{n,2}\,H_{n+1} & = & \langle w, y\,\mathbb{P}_n\,\mathbb{P}_{n+1}^t\rangle, \\
B_{n,1}\,H_n & = & \langle w, x\,\mathbb{P}_n\,\mathbb{P}_{n}^t\rangle, & & 
B_{n,2}\,H_n & = & \langle w, y\,\mathbb{P}_n\,\mathbb{P}_{n}^t\rangle, \\
C_{n,1}\,H_{n-1} & = & H_{n}\,A^t_{n-1,1}, & & C_{n,2}\,H_{n-1} & = & H_{n}\,A^t_{n-1,2}.
\end{array} 
$$
Moreover, for $n\ge 0$ and $i=1, 2,$ the matrices $A_{n,i}$ and $C_{n+1,i}$ satisfy the rank conditions
\begin{equation}\label{rango1}
\mathrm{rank} \, A_{n,i} = \mathrm{rank} \, C_{n+1,i}=n+1,
\end{equation}
and 
\begin{equation}\label{rango2}
\mathrm{rank} \, A_{n} = \mathrm{rank} \, C_{n+1}^t = n+2,
\end{equation}
where 
$$A_n=\left(\begin{array}{c}
  A_{n,1} \\
  A_{n,2}
\end{array}\right) \in \mathcal{M}_{2(n+1)\times (n+2)}(\mathbb{R}), \hspace{0.5 cm} C_{n} = 
\left(
  C_{n,1},  C_{n,2}\right) \in \mathcal{M}_{(n+1)\times 2\,n}(\mathbb{R})$$
are called \emph{the joint matrices of} $A_{n,i}$ and $C_{n,i}$, respectively.
\end{theorem}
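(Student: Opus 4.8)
The plan is to establish the relations by expanding the entries of $x\,\mathbb{P}_n$ and $y\,\mathbb{P}_n$ in the basis furnished by $\{\mathbb{P}_0, \mathbb{P}_1, \dots, \mathbb{P}_{n+1}\}$ and then invoking orthogonality to annihilate all but three consecutive blocks. Since every entry of $x\,\mathbb{P}_n$ has total degree $n+1$, and the entries of $\mathbb{P}_0,\dots,\mathbb{P}_{n+1}$ form a basis of $\Pi^2_{n+1}$, there are real matrices $M_{n,k}$ of appropriate sizes with
$$
x\,\mathbb{P}_n = \sum_{k=0}^{n+1} M_{n,k}\,\mathbb{P}_k .
$$
Multiplying on the right by $\mathbb{P}_m^t$ and applying $w$ gives $\langle w, x\,\mathbb{P}_n\,\mathbb{P}_m^t\rangle = M_{n,m}\,H_m$. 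For $m\le n-2$ the entries of $x\,\mathbb{P}_m$ have total degree at most $n-1$, so by orthogonality of $\mathbb{P}_n$ to $\Pi^2_{n-1}$ the left side vanishes and hence $M_{n,m}=0$. Setting $A_{n,1}=M_{n,n+1}$, $B_{n,1}=M_{n,n}$, $C_{n,1}=M_{n,n-1}$ yields \eqref{r3t-i1}, together with the stated identities for $A_{n,1}H_{n+1}$ and $B_{n,1}H_n$; the relation \eqref{r3t-i2} is obtained identically with $x$ replaced by $y$.

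Next I would deduce the formula for the $C_{n,i}$ by transposition. From the above, $C_{n,1}H_{n-1}=\langle w, x\,\mathbb{P}_n\,\mathbb{P}_{n-1}^t\rangle$. Because the action of $w$ on a polynomial matrix is entrywise, $\langle w, x\,\mathbb{P}_n\,\mathbb{P}_{n-1}^t\rangle = \langle w, x\,\mathbb{P}_{n-1}\,\mathbb{P}_n^t\rangle^t = (A_{n-1,1}H_n)^t = H_n\,A_{n-1,1}^t$, where the final step uses that $H_n$ is symmetric. This gives $C_{n,1}H_{n-1}=H_n A_{n-1,1}^t$, and likewise for $i=2$.

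The rank conditions are what I regard as the main point. Write $\mathbb{P}_n = G_n\,\mathbb{X}_n + (\text{lower degree})$, where $\mathbb{X}_n$ is the column vector of the $n+1$ monomials of degree $n$ and $G_n\in\mathcal{M}_{n+1}(\mathbb{R})$ is the leading coefficient matrix; $G_n$ is nonsingular precisely because the entries of $\mathbb{P}_n$ are linearly independent of degree exactly $n$. Multiplication by $x$ and by $y$ carries $\mathbb{X}_n$ into $\mathbb{X}_{n+1}$ through fixed $0$–$1$ matrices $L_{n,1}, L_{n,2}\in\mathcal{M}_{(n+1)\times(n+2)}(\mathbb{R})$, namely $x\,\mathbb{X}_n = L_{n,1}\mathbb{X}_{n+1}$ and $y\,\mathbb{X}_n = L_{n,2}\mathbb{X}_{n+1}$. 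Comparing the degree $n+1$ parts of \eqref{r3t-i1}--\eqref{r3t-i2} gives $A_{n,i}G_{n+1}=G_n L_{n,i}$, so that $A_{n,i}=G_n L_{n,i}G_{n+1}^{-1}$. Each $L_{n,i}$ selects $n+1$ distinct monomials of degree $n+1$ and therefore has full row rank $n+1$; since $G_n$ and $G_{n+1}$ are invertible and rank is preserved under left and right multiplication by nonsingular matrices, $\mathrm{rank}\,A_{n,i}=n+1$, which is \eqref{rango1} for $A_{n,i}$. For the joint matrix, the stacked shift $L_n=\left(\begin{smallmatrix} L_{n,1}\\ L_{n,2}\end{smallmatrix}\right)$ has full column rank $n+2$, because every monomial of degree $n+1$ is $x$ times a degree $n$ monomial or $y$ times one; then $A_n=\mathrm{diag}(G_n,G_n)\,L_n\,G_{n+1}^{-1}$ gives $\mathrm{rank}\,A_n=n+2$, which is \eqref{rango2} for $A_n$.

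Finally, the assertions about the $C$ matrices are purely linear-algebraic consequences of those about the $A$ matrices through $C_{n+1,i}H_n=H_{n+1}A_{n,i}^t$. Since $H_n$ and $H_{n+1}$ are nonsingular, $\mathrm{rank}\,C_{n+1,i}=\mathrm{rank}\,A_{n,i}^t=n+1$; and, using symmetry of the $H_k$, one writes $C_{n+1}^t=\mathrm{diag}(H_n^{-1},H_n^{-1})\,A_n\,H_{n+1}$, whence $\mathrm{rank}\,C_{n+1}^t=\mathrm{rank}\,A_n=n+2$. The one delicate ingredient is the explicit combinatorial description of the shift matrices $L_{n,i}$ together with the nonsingularity of the leading coefficient matrices $G_n$; once these are in hand, every remaining rank statement follows from multiplication by invertible matrices.
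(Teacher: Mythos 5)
The paper itself contains no proof of this theorem: it is quoted as background from \cite{DX14}, p.~70. So the comparison is with the standard textbook argument, and your proposal is essentially that argument: expand $x\,\mathbb{P}_n$, $y\,\mathbb{P}_n$ in the orthogonal basis and use orthogonality to kill all blocks of index $\le n-2$; get the $C$-identities by transposing (entrywise action of $w$ commutes with transposition) and using symmetry of $H_n$; and prove the rank conditions from the leading-coefficient identity $A_{n,i}\,G_{n+1}=G_n\,L_{n,i}$ with the monomial shift matrices $L_{n,i}$. The purely linear-algebraic deductions at the end ($C_{n+1,i}=H_{n+1}A_{n,i}^t H_n^{-1}$, $C_{n+1}^t=\mathrm{diag}(H_n^{-1},H_n^{-1})\,A_n\,H_{n+1}$, and invariance of rank under multiplication by invertible matrices) are all correct.

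There is, however, one genuine soft spot, and it is exactly the ingredient you yourself call delicate: the invertibility of the leading-coefficient matrices $G_n$. Your justification --- that $G_n$ is nonsingular ``precisely because the entries of $\mathbb{P}_n$ are linearly independent of degree exactly $n$'' --- is not valid: the polynomials $x$ and $x+1$ are linearly independent and both of degree exactly $1$, yet their leading-coefficient matrix has rows $(1,0)$ and $(1,0)$ and is singular. What forces $G_n$ to be invertible is orthogonality together with the nonsingularity of the matrices $H_m$, via induction on $n$: if $c^t G_n=0$, then $q:=c^t\mathbb{P}_n$ lies in $\Pi^2_{n-1}$, so by the inductive basis property $q=\sum_{m<n} d_m^t\,\mathbb{P}_m$; then $d_m^t H_m=\langle w, q\,\mathbb{P}_m^t\rangle=c^t\langle w,\mathbb{P}_n\mathbb{P}_m^t\rangle=0$, so every $d_m=0$, hence $q=0$, and now linear independence of the entries of $\mathbb{P}_n$ gives $c=0$. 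The same induction is what legitimizes the fact that the entries of $\mathbb{P}_0,\dots,\mathbb{P}_{n+1}$ form a basis of $\Pi^2_{n+1}$, which your very first expansion step relies on (for a bare PS as defined in the paper, that basis property can also fail, for the same reason). A smaller point of the same kind: full column rank of the stacked shift matrix $L_n$ does not follow merely from every column being nonzero; use instead that each row of $L_n$ contains exactly one entry $1$, so distinct columns have disjoint supports and the nonzero columns are automatically linearly independent. With these repairs your proof is complete and coincides with the one in \cite{DX14}.
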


\bigskip

If $w$ is positive--definite and the 
OPS $\{\mathbb{P}_n\}_{n\ge0}$ is orthonormal, then $H_n =I_{n+1}$, for $n\ge 0$, and 
$$
C_{n+1,i} = A_{n,i}^t, \quad i=1,2.
$$
Therefore, the three term relations \eqref{r3t-i1}--\eqref{r3t-i2} are written as
\begin{equation}\label{rec-orto}
\begin{aligned}
x\,\mathbb{P}_n &= A_{n,1}\,\mathbb{P}_{n+1} + B_{n,1}\,\mathbb{P}_n + A_{n-1,1}^t\,\mathbb{P}_{n-1}, \\
y\,\mathbb{P}_n &= A_{n,2}\,\mathbb{P}_{n+1} + B_{n,2}\,\mathbb{P}_n + A_{n-1,2}^t\,\mathbb{P}_{n-1}.
\end{aligned}
\end{equation}

\bigskip

We say that a bivariate moment functional $w$ is \emph{centrally symmetric} (\cite[p. 76]{DX14}) if
all the moments of odd order vanish
$$\langle w, x^h\,y^k\rangle = 0, \quad h, k\ge 0, \quad h+k = \textrm{odd~~ integer}.
$$
As in the univariate case, the properties of symmetry from the inner product can be related with the 
coefficient matrices of the three term relations. In \cite[p. 77]{DX14}, it is shown that a 
moment functional $w$ is centrally symmetric if and only if the matrices $B_{n,i}\equiv 0$ \, for all 
$n\ge 0$ and $i=1,2$.

\bigskip

\section{A class of two variable orthogonal polynomials}\label{KoorConstruction}

In this section, we describe a method (\cite{AS72} and \cite{Koor75}) to construct non trivial 
bivariate orthogonal polynomials from univariate orthogonal polynomials. This method is carefully 
described in \cite[p. 38]{DX14} for weight functions. We will extend it to quasi--definite moment 
functionals using similar ideas as the used ones in \cite{KLL01}. For 
positive definite moment functionals described in terms of weight functions, we recover the original 
Agahanov's method.

\bigskip

Let $u_x$ and $v_y$ be univariate quasi--definite moment functionals acting on the variables $x$ 
and $y$, respectively. Let $\rho(x)$ be a univariate function satisfying one of the following two conditions

\bigskip

\leftline{
\begin{tabular}{lp{10cm}}
\emph{Case I}: & $\rho(x)$ is a polynomial of degree $\le 1$, that is, $\rho(x) = r_1\,x+r_0$, 
with $|r_1|+|r_0|>0$,\\
\emph{Case II}: & $\rho(x)$ is the square root of a polynomial of degree at most 2, 
and $v_y$ is a symmetric moment functional,
\end{tabular}}

\noindent
and such that $u^{(m)}_x = \rho^{2m+1}(x)\,u_x$ are quasi--definite moment functionals, for $m\ge 0$, in both 
Case I and Case II.

\bigskip

Anyway, $\rho(x)^2$ is a polynomial of degree less than or equal to 2, and from now on, 
we will denote
\begin{equation}\label{rho}
\rho(x)^2 = s_2\, x^2 + s_1\, x + s_0,
\end{equation}
its explicit expression, with $s_2, s_1, s_0 \in \mathbb{R}$, and $|s_2|+|s_1| + |s_0|>0$. Observe that, in 
the \emph{Case I} $s_2 = r_1^2\ge 0$, $s_1 = 2\,r_1\,r_0$, and $s_0 = r_0^2\ge 0$.

\bigskip

Let us denote by $w$ as the bivariate moment functional defined by means of its moments
\begin{equation}\label{w-def}
\langle w, x^h\,y^k\rangle = \langle \rho(x)^{k+1}\,u_x,x^h\rangle\,\langle v_y, y^k\rangle, \quad
h,k\ge 0,
\end{equation}
and extended by linearity. Then, it is easy to check that 
$$\langle w, p(x,y)\rangle = \langle u_x, \;\langle v_y, \rho(x)\,p(x,\rho(x)\,y)\rangle\;\rangle = 
\langle \rho(x)\,u_x, \;\langle v_y, p(x,\rho(x)\,y)\rangle\;\rangle,$$
for all $p(x,y)\in\Pi^2$.

\bigskip

For $m\ge 0$, let $\{p_{n}^{(m)}(x)\}_{n\ge0}$ be an orthogonal polynomial sequence 
with respect to the quasi--definite moment functional $u^{(m)}_x = \rho(x)^{2m+1}\,u_x$,
and let $\{q_n(y)\}_{n\ge0}$ be an orthogonal polynomial sequence with respect 
to the moment functional $v_y$. 

\bigskip

Then, we define the polynomials 
\begin{equation}\label{kops}
P_{n,m}(x,y) = p_{n-m}^{(m)}(x)\,\rho(x)^m\, q_m\left(\frac{y}{\rho(x)}\right), \quad 0 \le m\le n.
\end{equation}

Clearly, in \emph{Case I}, $P_{n,m}(x,y)$ is a bivariate polynomial of degree $n-m$ in the first 
variable $x$ and degree $m$ in $y$. In \emph{Case II} the same result can be deduced from 
the fact that $q_m(y)$ has the same parity as $m$, as a consequence of the symmetry of $v_y$.

\begin{theorem}

The bivariate moment functional $w$ defined in \eqref{w-def} is quasi--definite, and the set of 
polynomials $\{P_{n,m}(x,y): n, m\ge 0\}_{n,m\ge 0}$ defined in \eqref{kops} constitutes  
a mutually orthogonal polynomial system with respect to $w$. Moreover, if $u^{(m)}_x$, for $m\ge 0$,
and $v_y$ are positive--definite, then $w$ is positive--definite.

\end{theorem}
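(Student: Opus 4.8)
The plan is to compute the bilinear form $\langle w, P_{n,m}\,P_{n',m'}\rangle$ explicitly and show that it factorizes into a product of two univariate orthogonality relations, one governed by $v_y$ and one by $u^{(m)}_x$. This will simultaneously establish that $\{P_{n,m}\}$ is orthogonal with a \emph{diagonal} matrix $H_n$, that $H_n$ is non--singular (hence $w$ is quasi--definite), and --- when the factors are positive--definite --- that the diagonal entries are positive (hence $w$ is positive--definite). The whole argument rests on the integral identity displayed just after \eqref{w-def}, which I take as the starting point.

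The key computation exploits the substitution built into that identity. Since $q_m\!\big(\rho(x)y/\rho(x)\big)=q_m(y)$, the variables decouple,
\[
P_{n,m}(x,\rho(x)\,y) = p_{n-m}^{(m)}(x)\,\rho(x)^m\,q_m(y),
\]
so that, writing $p=P_{n,m}P_{n',m'}$,
\[
\langle v_y,\, p(x,\rho(x)\,y)\rangle = p_{n-m}^{(m)}(x)\,p_{n'-m'}^{(m')}(x)\,\rho(x)^{m+m'}\,\langle v_y,\, q_m\,q_{m'}\rangle .
\]
Orthogonality of $\{q_k\}$ with respect to $v_y$ makes this vanish unless $m=m'$, in which case it equals $p_{n-m}^{(m)}(x)\,p_{n'-m}^{(m)}(x)\,\rho(x)^{2m}\,\widetilde{h}_m$ with $\widetilde{h}_m=\langle v_y, q_m^2\rangle\neq0$. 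Applying $\rho(x)\,u_x$ and combining the even power $\rho(x)^{2m}$ with the extra factor $\rho(x)$ produces $\rho(x)^{2m+1}u_x=u^{(m)}_x$, whence
\[
\langle w,\, P_{n,m}\,P_{n',m'}\rangle = \langle u^{(m)}_x,\, p_{n-m}^{(m)}\,p_{n'-m}^{(m)}\rangle\,\widetilde{h}_m\,\delta_{m,m'} = h_{n-m}^{(m)}\,\widetilde{h}_m\,\delta_{m,m'}\,\delta_{n,n'},
\]
where $h_{k}^{(m)}=\langle u^{(m)}_x,(p_k^{(m)})^2\rangle\neq0$ by quasi--definiteness of $u^{(m)}_x$.

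It remains to package this as the statement. First I would verify that $\{P_{n,m}\}_{0\le m\le n}$ is a genuine PS: by construction $P_{n,m}$ has degree $m$ in $y$ and total degree $n$, with leading monomial a nonzero multiple of $x^{n-m}y^m$; as $m$ runs over $0,\dots,n$ these leading monomials are distinct, so $\mathbb{P}_n=(P_{n,0},\dots,P_{n,n})^t$ has linearly independent entries and the union over $n$ is a basis of $\Pi^2$, triangular with respect to the monomial basis. The displayed formula then reads $\langle w,\mathbb{P}_n\mathbb{P}_{n'}^t\rangle = \delta_{n,n'}\,H_n$ with $H_n=\mathrm{diag}\big(h_{n-m}^{(m)}\,\widetilde{h}_m\big)_{m=0}^{n}$, which is diagonal and non--singular; this is precisely the definition of a mutually orthogonal system and shows $w$ is quasi--definite. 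If moreover all $u^{(m)}_x$ and $v_y$ are positive--definite, then each $h_{n-m}^{(m)}>0$ and $\widetilde{h}_m>0$, so expanding any $p\neq0$ in the basis and using the formula gives $\langle w,p^2\rangle=\sum c_{n,m}^2\,h_{n-m}^{(m)}\,\widetilde{h}_m>0$, i.e.\ $w$ is positive--definite.

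The only delicate point, and the step I would write out most carefully, is the legitimacy of these manipulations in \emph{Case II}, where $\rho$ is not a polynomial. There the symmetry of $v_y$ does double duty: it guarantees that $q_m$ has the parity of $m$, so that $P_{n,m}$ contains only even powers $\rho^{2k}$ of $\rho$ and is therefore a bona fide bivariate polynomial, and it is also what forces the surviving term to carry the odd power $\rho^{2m+1}$ that reassembles into $u^{(m)}_x$. Once one checks that each functional ($v_y$, $\rho\,u_x$, and $u^{(m)}_x$) is applied only to honest polynomials, the computation above goes through verbatim in both cases.
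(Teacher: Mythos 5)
Your proof is correct and follows essentially the same route as the paper: the same key computation of $\langle w, P_{n,m}P_{n',m'}\rangle$ via the identity following \eqref{w-def}, using the decoupling $P_{n,m}(x,\rho(x)y)=p_{n-m}^{(m)}(x)\rho(x)^m q_m(y)$, orthogonality in $y$ to force $m=m'$, and reassembly of $\rho^{2m+1}u_x=u_x^{(m)}$ to finish with orthogonality in $x$. The extra details you supply (linear independence of the $P_{n,m}$ via leading monomials, the parity argument in Case II, and the expansion argument for positive--definiteness) are points the paper leaves implicit, so they strengthen rather than change the argument.
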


\begin{proof}

We will compute the action of $w$ over the product of two polynomials given by \eqref{kops}:
\begin{eqnarray*}
\langle w, P_{n,m}(x,y)P_{h,k}(x,y) \rangle &=& \langle u_x,\; \langle v_y, \rho(x)\,
P_{n,m}(x,\rho(x)y) \,P_{h,k}(x,\rho(x)y)\rangle \;\rangle\\
&=& \langle u_x, p_{n-m}^{(m)}(x)\,p_{h-k}^{(k)}(x)\,\rho(x)^{m+k+1}\; 
\langle v_y, q_m(y)\, q_k(y)\rangle\;\rangle\\
&=& \langle \rho(x)^{m+k+1}\,u_x,p_{n-m}^{(m)}(x)\,p_{h-k}^{(k)}(x)\rangle\,
\tilde{h}_m\,\delta_{m,k}\\
&=& h^{(m)}_{n-m}\,\tilde{h}_m\,\delta_{n,h}\,\delta_{m,k},
\end{eqnarray*}
where 
$$
h_{n-m}^{(m)} = \langle u^{(m)}_x, p_{n-m}^{(m)}(x)\,p_{n-m}^{(m)}(x)\rangle, \quad 
\tilde{h}_m\, = \langle v_y, q_{m}(y)\,q_{m}(y)\rangle, \quad 0\le m\le n,
$$
both quantities are different from zero because of the quasi--definitive character of the moment functionals.

Therefore, $w$ is quasi--definite since we have found a mutually orthogonal polynomial system. Moreover,
if we define
$$
\mathbf{h}_{n,m} = \langle w, P_{n,m}(x,y) \,P_{n,m}(x,y) \rangle = h^{(m)}_{n-m}\,\tilde{h}_m,
$$
then $w$ is positive definite if $u^{(m)}_x$ and $v_y$ are positive definite.
\end{proof}

\bigskip

From now on, we will consider an orthogonal polynomial system $\{\mathbb{P}_n\}_{n\ge 0}$ 
$$
\mathbb{P}_n=(P_{n,0}(x,y), P_{n,1}(x,y), \dots, P_{n,n}(x,y))^t, \quad n\ge 0,
$$
such that $P_{n,m}(x,y)$, $0\le m \le n$, is constructed using method (\ref{kops}).

\bigskip

\section{Three term relations for bivariate polynomials}

In this section, we will deduce explicit expressions for the matrix coefficients of 
the three term relations for polynomials defined in (\ref{kops}). First, we need 
to stablish some properties for the univariate polynomials.

\bigskip

Suppose that the explicit expressions for the univariate orthogonal polynomial sequences
$\{p_n^{(m)}(x)\}_{n\ge0}$ and $\{q_n(y)\}_{n\ge0}$ are given by
$$
\begin{array}{lll}
p_n^{(m)}(x) & = & k_n^{(m)}\,x^n + l_n^{(m)}\, x^{n-1} + \textrm{lower degree terms}, \quad m\ge 0,\\
q_n(y) & = & \tilde{k}_n\,y^n + \tilde{l}_n\, y^{n-1}+ \textrm{lower degree terms}.
\end{array}
$$
An orthogonal polynomial sequence in one variable satisfy a three term recurrence 
relation (\cite{Ch78}, \cite{Sz78}). We write these relations for our families
\begin{eqnarray}
x\, p_n^{(m)}(x) &=& a_n^{(m)}\, p_{n+1}^{(m)}(x) + b_n^{(m)}\, p_n^{(m)}(x) + c_{n}^{(m)}\,
p_{n-1}^{(m)}(x),\quad n\ge 0,
\label{recurrence1}\\
p_{-1}^{(m)}(x)& =& 0, \quad p_0^{(m)}(x)=1, \quad m\ge 0,\nonumber
\end{eqnarray}
where
\begin{equation}\label{rrcoefp}
a_n^{(m)} =  \frac{k_n^{(m)}}{k_{n+1}^{(m)}}, \qquad b_n^{(m)} = \frac{l_n^{(m)}}{k_n^{(m)}} -
\frac{l_{n+1}^{(m)}}{k_{n+1}^{(m)}}, \qquad c_n^{(m)} = a^{(m)}_{n-1}\,\frac{h^{(m)}_n}{h^{(m)}_{n-1}},
\end{equation}
and
\begin{eqnarray}
y\,q_n(y) & =& \tilde{a}_n \, q_{n+1}(y) + \tilde{b}_n\,q_n(y) + \tilde{c}_{n}\,q_{n-1}(y), \quad n\ge 0,
\label{recurrence2}\\
q_{-1}(y) &=& 0, \quad q_0(y)=1,\nonumber
\end{eqnarray}
with 
\begin{equation}\label{rrcoefq}
\tilde{a}_n  = \frac{\tilde{k}_n}{\tilde{k}_{n+1}}, \qquad \tilde{b}_n  = \frac{\tilde{l}_n}{\tilde{k}_n} -
\frac{\tilde{l}_{n+1}}{\tilde{k}_{n+1}}, \qquad \tilde{c}_n = \tilde{a}_{n-1}\,
\frac{\tilde{h}_n}{\tilde{h}_{n-1}}.
\end{equation}
The moment functional $u_x^{(m)}$ is symmetric, respectively $v_y$ is symmetric, if and only if 
$b_n^{(m)}=0$, respectively $\tilde{b}_n=0$, $n\ge 0$.

\bigskip

In our next result, we relate two families of orthogonal polynomials $\{p_n^{(m)}\}_{n\ge 0}$ for 
consecutive values of the integer $m$. 

\begin{lemma}\label{lemma31}
For $m\ge 0$, let $\{p_{n}^{(m)}\}_{n\ge 0}$ and $\{p_{n}^{(m+1)}\}_{n\ge 0}$ be 
univariate sequences of orthogonal polynomials associated with the quasi--definite moment functionals 
$u_x^{(m)}$ and $u_x^{(m+1)}$. Then
\begin{eqnarray}
p_{n}^{(m)}(x) &=& \delta_{n}^{(m)}\,p_{n}^{(m+1)}(x) + \epsilon_{n}^{(m)}\,p_{n-1}^{(m+1)}(x) +
\zeta_{n}^{(m)}\,p_{n-2}^{(m+1)}(x),\label{eq:recurrel1}\\
\rho(x)^2\, p_{n}^{(m+1)}(x) &=& \eta_{n}^{(m)}\, p_{n+2}^{(m)}(x) +
\theta_{n}^{(m)} \,p_{n+1}^{(m)}(x) + \vartheta_{n}^{(m)}\,p_{n}^{(m)}(x),\label{eq:recurrel2}
\end{eqnarray}
where 
\begin{equation}\label{eq3.1}
\delta_n^{(m)} = \frac{k_n^{(m)}}{k_n^{(m+1)}}, \quad 
\epsilon_{n}^{(m)} = \frac{l_n^{(m)}}{k_{n-1}^{(m+1)}} - \frac{k_n^{(m)}}{k_{n}^{(m+1)}}\,
\frac{l_n^{(m+1)}}{k_{n-1}^{(m+1)}}, \quad \zeta_{n}^{(m)} = s_2\, \frac{k_{n-2}^{(m+1)}}{k_n^{(m)}}
\,\frac{h_{n}^{(m)}}{h_{n-2}^{(m+1)}},
\end{equation}
and
\begin{equation}\label{eq3.2}
\eta_{n}^{(m)} = s_2\, \frac{k_{n}^{(m+1)}}{k_{n+2}^{(m)}}, \qquad 
\theta_{n}^{(m)}= \epsilon_{n+1}^{(m)}\,\frac{h_n^{(m+1)}}{h_{n+1}^{(m)}}, \qquad 
\vartheta_{n}^{(m)}= \delta_n^{(m)}\,\frac{h_n^{(m+1)}}{h_n^{(m)}}.
\end{equation}

\end{lemma}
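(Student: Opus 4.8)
The plan is to exploit the single structural fact that links the two families, namely $u_x^{(m+1)} = \rho(x)^2\,u_x^{(m)}$, which follows at once from $u_x^{(m)} = \rho(x)^{2m+1}u_x$ and the definition of left multiplication; concretely, for all polynomials $p,q$ one has $\langle u_x^{(m+1)}, p\,q\rangle = \langle u_x^{(m)}, \rho(x)^2\,p\,q\rangle$. Both identities then reduce to expanding one family in the basis furnished by the other and reading off the Fourier coefficients, the truncation to three terms being forced by orthogonality together with the fact that $\rho(x)^2$ has degree at most two. Throughout, quasi--definiteness guarantees every $h_k^{(\cdot)}\neq 0$, so all the divisions below are legitimate.

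For \eqref{eq:recurrel1} I would write $p_n^{(m)}(x) = \sum_{k=0}^n c_{n,k}\,p_k^{(m+1)}(x)$, since $\{p_k^{(m+1)}\}$ is a basis, with $c_{n,k} = \langle u_x^{(m+1)}, p_n^{(m)}\,p_k^{(m+1)}\rangle / h_k^{(m+1)}$. Rewriting the numerator as $\langle u_x^{(m)}, \rho(x)^2\,p_k^{(m+1)}\,p_n^{(m)}\rangle$ and noting that $\rho(x)^2\,p_k^{(m+1)}$ has degree $k+2$, orthogonality of $p_n^{(m)}$ with respect to $u_x^{(m)}$ kills every term with $k+2 < n$; only $k=n,n-1,n-2$ survive, which is the asserted shape. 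Here $\delta_n^{(m)}$ comes from matching leading coefficients, $\epsilon_n^{(m)}$ from matching the coefficient of $x^{n-1}$ on both sides, and $\zeta_n^{(m)}$ by evaluating the Fourier coefficient directly: only the leading term $s_2\,k_{n-2}^{(m+1)}x^n$ of $\rho(x)^2\,p_{n-2}^{(m+1)}$ pairs nontrivially with $p_n^{(m)}$, giving $\langle u_x^{(m)},x^n\,p_n^{(m)}\rangle = h_n^{(m)}/k_n^{(m)}$ and hence $\zeta_n^{(m)}$.

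For \eqref{eq:recurrel2} I would symmetrically expand $\rho(x)^2\,p_n^{(m+1)}(x) = \sum_{k=0}^{n+2} d_{n,k}\,p_k^{(m)}(x)$, with $d_{n,k} = \langle u_x^{(m)}, \rho(x)^2\,p_n^{(m+1)}\,p_k^{(m)}\rangle/h_k^{(m)} = \langle u_x^{(m+1)}, p_n^{(m+1)}\,p_k^{(m)}\rangle/h_k^{(m)}$. Now orthogonality of $p_n^{(m+1)}$ with respect to $u_x^{(m+1)}$ forces $d_{n,k}=0$ for $k<n$, leaving $k=n,n+1,n+2$. The top coefficient $\eta_n^{(m)}$ again comes from leading terms, and the diagonal coefficient $\vartheta_n^{(m)}$ from replacing $p_n^{(m)}$ by $\delta_n^{(m)}p_n^{(m+1)} + (\text{lower } p^{(m+1)})$ inside the inner product, so that only the $p_n^{(m+1)}$ component contributes.

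The step I expect to be the main obstacle — and the one worth spelling out — is the middle coefficient $\theta_n^{(m)}$. There the clean move is to substitute \eqref{eq:recurrel1}, already proved and with $n$ replaced by $n+1$, for $p_{n+1}^{(m)}$ inside $\langle u_x^{(m+1)}, p_n^{(m+1)}\,p_{n+1}^{(m)}\rangle$; orthogonality of the $p^{(m+1)}$ family then isolates the single surviving term $\epsilon_{n+1}^{(m)}\,h_n^{(m+1)}$, yielding $\theta_n^{(m)} = \epsilon_{n+1}^{(m)}\,h_n^{(m+1)}/h_{n+1}^{(m)}$. Thus \eqref{eq:recurrel2} is bootstrapped from \eqref{eq:recurrel1}, and it is precisely this interlocking of the two expansions, rather than any single computation, that requires care.
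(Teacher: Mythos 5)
Your proposal is correct and follows essentially the same route as the paper: both expand each family in the other via Fourier coefficients with respect to $u_x^{(m+1)}=\rho(x)^2\,u_x^{(m)}$, truncate by the degree-two bound on $\rho(x)^2$, obtain $\delta_n^{(m)}$, $\epsilon_n^{(m)}$, $\eta_n^{(m)}$ by comparing leading and subleading coefficients, and bootstrap $\theta_n^{(m)}$ (and $\vartheta_n^{(m)}$) by substituting \eqref{eq:recurrel1} into the relevant inner product. No gaps; the interlocking step you flag as delicate is exactly how the paper handles it.
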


\bigskip

\begin{proof}

First, we express $p_{n}^{(m)}(x)$ in terms of the polynomials $\{p_{i}^{(m+1)}\}_{i\ge 0}$
as follows
\begin{equation}\label{eq:lcomb1}
p_{n}^{(m)}(x) = \sum_{i=0}^{n} \, d_{i}^{(n)}(m)\,p_{i}^{(m+1)}(x),
\end{equation}
for a fixed non--negative integer $m$, and $n\ge 0$. Then, 
$$
d_{i}^{(n)}(m) \, h_i^{(m+1)} = \langle u_x^{(m+1)}, p_{n}^{(m)}\, p_{i}^{(m+1)} \rangle =
 \langle u_x^{(m)}, p_{n}^{(m)}\, \rho(x)^2\,p_{i}^{(m+1)} \rangle.
$$
Observe that $\rho(x)^2$ is a polynomial of degree at most $2$, and then $d_{i}^{(n)}(m)=0$ 
when $i+2 < n$. Comparing leading coefficients, we obtain
$$
\delta_n^{(m)} = d_{n}^{(n)}(m) = \frac{k_n^{(m)}}{k_n^{(m+1)}}.
$$
For $i=n-2$, we get
\begin{eqnarray*}
d_{n-2}^{(n)}(m)\, h_{n-2}^{(m+1)} &=& \langle u_x^{(m+1)}, p_{n}^{(m)}\, p_{n-2}^{(m+1)} \rangle = 
\langle u_x^{(m)}, p_{n}^{(m)}\, \rho(x)^2\,p_{n-2}^{(m+1)} \rangle
\\
& = & s_2\, \frac{k_{n-2}^{(m+1)}}{k_n^{(m)}}\,h_{n}^{(m)},
\end{eqnarray*}
then we get
$$\zeta_n^{(m)} = d_{n-2}^{(n)}(m) = s_2\, \frac{k_{n-2}^{(m+1)}}{k_n^{(m)}}\,
\frac{h_{n}^{(m)}}{h_{n-2}^{(m+1)}},$$
using the explicit expression for $\rho(x)^2$ \eqref{rho}. When $i=n-1$, identifying coefficients 
in the explicit expressions of the polynomials involved in \eqref{eq:recurrel1}, 
we get
$$\epsilon_n^{(m)}\, k_{n-1}^{(m+1)} = l_n^{(m)} - \delta_n^{(m)}\,l_n^{(m+1)},$$
and the expression for $\epsilon_n^{(m)}$ is deduced.

\bigskip

\noindent
In a similar way, for $m\ge 0$, there exist constants $e_{i}^{(n)}(m)$, $0\le i \le n+2$, such that

\begin{equation}\label{lincom1}
\rho(x)^2\,p_{n}^{(m+1)}(x) = \sum_{i=0}^{n+2}\, e_{i}^{(n)}(m)\, p_{i}^{(m)}(x),
\end{equation}
where
$$
e_{i}^{(n)}(m)\,h_i^{(m)} = \langle u_x^{(m)}, \rho(x)^2\, p_{n}^{(m+1)}\, p_{i}^{(m)} \rangle
= \langle u_x^{(m+1)}, p_{n}^{(m+1)}\, p_{i}^{(m)} \rangle.
$$
Then, $e_{i}^{(n)}(m)=0$ for $i<n$. If $i=n+2$,
\begin{equation*}
e_{n+2}^{(n)}(m)\,h_{n+2}^{(m)} = \langle u_x^{(m)}, \rho(x)^2\,p_{n}^{(m+1)}\,p_{n+2}^{(m)}
\rangle = s_2\,
\frac{k_n^{(m+1)}}{k_{n+2}^{(m)}} \,h_{n+2}^{(m)}, 
\end{equation*}
and therefore
$$
\eta_{n}^{(m)} = e_{n+2}^{(n)}(m) = s_2\,\frac{k_n^{(m+1)}}{k_{n+2}^{(m)}} = \zeta_{n+2}^{(m)}\,
\frac{h_n^{(m+1)}}{h_{n+2}^{(m)}}.
$$
When $i=n+1$, we get
$$e_{n+1}^{(n)}(m)\,h_{n+1}^{(m)}=\langle u_x^{(m+1)}, p_{n}^{(m+1)}\, p_{n+1}^{(m)} \rangle = 
\epsilon_{n+1}^{(m)}\,h_{n}^{(m+1)},
$$
thus
$$\theta_{n}^{(m)} = e_{n+1}^{(n)}(m) = \epsilon_{n+1}^{(m)}\,\frac{h_{n}^{(m+1)}}{h_{n+1}^{(m)}}.
$$
Finally, 
$$
e_{n}^{(n)}(m)\,h_n^{(m)} =\langle u_x^{(m+1)},p_{n}^{(m+1)}, p_{n}^{(m)} \rangle = 
\delta_n^{(m)} \,h_n^{(m+1)},
$$
that is, 
$$\vartheta_n^{(m)} = e_{n}^{(n)}(m) = \delta_n^{(m)} \,\frac{h_n^{(m+1)}}{h_n^{(m)}}.$$
\end{proof}

\bigskip

Three term relations for the bivariate polynomials described in (\ref{kops}) will be stated in the 
following two theorems. The coefficients in the \emph{first three term relation} corresponding to the 
$x$ first variable are diagonal matrices.

\begin{theorem}\label{xr3t}
Let $\{\mathbb{P}_n\}_{n\ge0}$ be an orthogonal PS constructed by means of \eqref{kops}. 
The matrix coefficients in the first three term relation \eqref{r3t-i1} are given by
\begin{equation*}
A_{n,1} = \left(
\begin{array}{cccc|c}
a_{n}^{(0)} &  & & \bigcirc & 0\\
 & a_{n-1}^{(1)} & & & \vdots \\
  & & \ddots &  & \vdots\\
\bigcirc  & & & a_{0}^{(n)} & 0\\
\end{array}\right), \, B_{n,1} = \left(
\begin{array}{cccc}
b_{n}^{(0)} & & & \bigcirc \\
 & b_{n-1}^{(1)}  & & \\
 & &  \ddots & \\
\bigcirc & & & b_{0}^{(n)}
\end{array}
\right),
\end{equation*}

$$
C_{n,1} = \left(
\begin{array}{cccc}
c_{n}^{(0)} &  & & \bigcirc\\
 & c_{n-1}^{(1)} & &  \\
  & & \ddots &  \\
\bigcirc  & & & c_{1}^{(n-1)} \\
\hline
0 & \ldots & \ldots & 0
\end{array}\right),
$$
where $a_{n-m}^{(m)}$, $b_{n-m}^{(m)}$, and $c_{n-m}^{(m)}$, for $0\le m\le n$, are defined in \eqref{rrcoefp}.

\end{theorem}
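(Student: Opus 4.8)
The plan is to argue componentwise, exploiting the fact that multiplication by $x$ acts only on the univariate factor $p_{n-m}^{(m)}(x)$ in \eqref{kops}, leaving the ``$y$--part'' $\rho(x)^m\,q_m(y/\rho(x))$ untouched. I would fix $m$ with $0\le m\le n$, take the $m$--th entry $P_{n,m}$ of $\mathbb{P}_n$, and write
$$x\,P_{n,m}(x,y) = \bigl[\,x\,p_{n-m}^{(m)}(x)\,\bigr]\;\rho(x)^m\,q_m\!\left(\frac{y}{\rho(x)}\right),$$
then apply the one--variable three term recurrence \eqref{recurrence1} for the sequence $\{p_k^{(m)}\}_{k\ge0}$ (with the \emph{same} upper index $m$) to the bracketed factor.

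The key observation is that, keeping $m$ fixed, the polynomials produced by the recurrence are exactly the entries $P_{n\pm1,m}$ and $P_{n,m}$ of neighbouring vectors: since $(n\pm1)-m=(n-m)\pm1$, one has $p_{(n-m)\pm1}^{(m)}(x)\,\rho(x)^m q_m(y/\rho(x))=P_{n\pm1,m}(x,y)$, and likewise for the middle term. Substituting yields the scalar identity
$$x\,P_{n,m} = a_{n-m}^{(m)}\,P_{n+1,m} + b_{n-m}^{(m)}\,P_{n,m} + c_{n-m}^{(m)}\,P_{n-1,m},\qquad 0\le m\le n.$$
Each such relation links the $m$--th component of $x\,\mathbb{P}_n$ only to the $m$--th components of $\mathbb{P}_{n+1}$, $\mathbb{P}_n$, and $\mathbb{P}_{n-1}$, which is precisely the reason the coefficient matrices turn out diagonal.

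It then remains to assemble these $n+1$ identities ($m=0,\dots,n$) into the vector form \eqref{r3t-i1} and to read off sizes and zero padding. For $B_{n,1}$ this is immediate: it is the $(n+1)\times(n+1)$ diagonal matrix with entries $b_n^{(0)},\dots,b_0^{(n)}$. For $A_{n,1}$, the target $\mathbb{P}_{n+1}$ has one extra entry $P_{n+1,n+1}$ which never appears on the right--hand side (the index $m$ only reaches $n$); hence $A_{n,1}$ is $(n+1)\times(n+2)$, diagonal in its first $n+1$ columns with entries $a_n^{(0)},\dots,a_0^{(n)}$ and with a vanishing last column. For $C_{n,1}$, the target $\mathbb{P}_{n-1}$ has only $n$ entries, so the row $m=n$ has no matching component $P_{n-1,n}$; consistently its coefficient is $c_0^{(n)}=0$ by the convention $p_{-1}^{(m)}=0$ in \eqref{recurrence1}. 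Thus $C_{n,1}$ is $(n+1)\times n$, diagonal in its first $n$ rows with entries $c_n^{(0)},\dots,c_1^{(n-1)}$ and with a vanishing last row.

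I expect no genuine obstacle here, since the construction \eqref{kops} effectively diagonalizes multiplication by $x$; the only care needed is index bookkeeping, namely checking that the degenerate terms ($P_{n+1,n+1}$ absent from $A_{n,1}$ and $c_0^{(n)}=0$ in the last row of $C_{n,1}$) are correctly accounted for, and that the matrices so obtained are indeed \emph{the} three term coefficients. The latter holds because the expansion of the degree--$(n+1)$ vector $x\,\mathbb{P}_n$ in the basis $\{\mathbb{P}_0,\dots,\mathbb{P}_{n+1}\}$ of $\Pi_{n+1}^2$ is unique. In \emph{Case II} one should also recall, as noted after \eqref{kops}, that the parity of $q_m$ guarantees $\rho(x)^m q_m(y/\rho(x))$ is a genuine polynomial, so all the manipulations above stay within $\Pi^2$.
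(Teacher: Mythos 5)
Your proof is correct and follows essentially the same route as the paper: multiply each entry $P_{n,m}$ by $x$, apply the univariate recurrence \eqref{recurrence1} with the fixed upper index $m$, recognize the resulting polynomials as $P_{n+1,m}$, $P_{n,m}$, $P_{n-1,m}$, and assemble the $n+1$ scalar identities into the stated diagonal matrices. One cosmetic remark: the last row of $C_{n,1}$ vanishes because the term $c_{0}^{(n)}\,p_{-1}^{(n)}$ is zero by the convention $p_{-1}^{(m)}=0$ (the coefficient $c_{0}^{(n)}$ itself is not defined by \eqref{rrcoefp}, rather the whole term drops out), but this does not affect the argument.
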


\bigskip

\begin{proof}
If we multiply (\ref{kops}) times $x$, the three term recurrence relation (\ref{recurrence1}) gives
\begin{eqnarray*}
\lefteqn{x\,P_{n,m}(x,y) = x\,p_{n-m}^{(m)}(x)\,\rho(x)^m\,q_m\left(\frac{y}{\rho(x)}\right)}\\
             &=& [a_{n-m}^{(m)}p_{n-m+1}^{(m)}(x)+b_{n-m}^{(m)}p_{n-m}^{(m)}(x) + c_{n-m}^{(m)}
             p_{n-m-1}^{(m)}(x)]\\
             &~& \times \rho(x)^m\,q_m\left(\frac{y}{\rho(x)}\right)\\
             &=& a_{n-m}^{(m)}\,P_{n+1,m}(x,y) + b_{n-m}^{(m)}\,P_{n,m}(x,y) + c_{n-m}^{(m)}\,P_{n-1,m}(x,y).
\end{eqnarray*}
The result follows from the above relation for $m = 0, 1, 2, \dots, n$, and the vector
 notation \eqref{**}.
\end{proof}

\bigskip

The matrix coefficients of the \emph{second three term relation} for Koornwinder polynomials are tridiagonal 
matrices, as we will prove in the next theorem.

\begin{theorem}\label{yr3t}
The matrix coefficients of the second three term relation \eqref{r3t-i2} for an orthogonal PS
generated by \eqref{kops}
are given by the tridiagonal matrices
\begin{equation}\label{eq:dsubn}
A_{n,2} = \left(\begin{array} {cccc|c}
a^{(0)}_{2,n}   & a^{(0)}_{3,n}   & \ldots      &      0        & 0         \\
a^{(1)}_{1,n-1} & a^{(1)}_{2,n-1} & \ddots      & \vdots        & \vdots     \\
\vdots        & \ddots        & \ddots      & a^{(n-1)}_{3,1} & 0          \\
0             & \ldots        & a^{(n)}_{1,0} & a^{(n)}_{2,0}   & a^{(n)}_{3,0} 
\end{array} \right),
\end{equation}
where
\begin{equation}\label{An,2}
\begin{array}{lllll}
a^{(m)}_{1,n-m} &=& \tilde{c}_m\,\eta_{n-m}^{(m-1)}, &\qquad & 1\le m \le n,\\
a^{(m)}_{3,n-m} &=& \tilde{a}_m\,\delta_{n-m}^{(m)},           &\qquad & 0\le m \le n,\\
\end{array}
\end{equation}

\begin{equation}\label{eq:esubn}
B_{n,2} = \left(\begin{array} {cccc}
b^{(0)}_{2,n}   & b^{(0)}_{3,n}   & \ldots      &      0        \\
b^{(1)}_{1,n-1} & b^{(1)}_{2,n-1} & \ddots      & \vdots        \\
\vdots              & \ddots              & \ddots      & b^{(n-1)}_{3,1} \\
0                   & \ldots              & b^{(n)}_{1,0} & b^{(n)}_{2,0}    
\end{array} \right),
\end{equation}
where
\begin{equation}\label{Bn,2}
\begin{array}{lllll}
b^{(m)}_{1,n-m} &=& \tilde{c}_m\,\theta_{n-m}^{(m-1)}, &\qquad & 1\le m \le n,\\
b^{(m)}_{3,n-m} &=& \tilde{a}_m\,\epsilon_{n-m}^{(m)},           &\qquad & 0\le m \le n-1,\\
\end{array}
\end{equation}

\begin{equation}\label{eq:esucn}
C_{n,2} = \left(\begin{array} {cccc}
c^{(0)}_{2,n}   & c^{(0)}_{3,n}   & \ldots             &      0                \\
c^{(1)}_{1,n-1} & c^{(1)}_{2,n-1} & \ddots             & \vdots                \\
\vdots                & \ddots               & \ddots              & c^{(n-2)}_{3,2} \\
0                     & \ldots               & c^{(n-1)}_{1,1} & c^{(n-1)}_{2,1}   \\ 
\hline
0                     &\ldots                & 0                   & 0 
\end{array} \right),
\end{equation}
where
\begin{equation}\label{Cn,2}
\begin{array}{lllll}
c^{(m)}_{1,n-m} &=& \tilde{c}_m\,\vartheta_{n-m}^{(m-1)}, &\qquad & 1\le m \le n-1,\\
c^{(m)}_{3,n-m} &=& \tilde{a}_m\,\zeta_{n-m}^{(m)},         &\qquad & 0\le m \le n-2,\\
\end{array}
\end{equation}
and

\noindent
\emph{Case I}: if $\rho(x) = r_1\,x+r_0$, where $|r_1|+|r_0|>0$, then 

\begin{equation}\label{e_2}
\begin{array}{lllll}
a^{(m)}_{2,n-m} &=& \tilde{b}_m\,r_1\,a_{n-m}^{(m)},           &\qquad & 0\le m \le n,\\
b^{(m)}_{2,n-m} &=& \tilde{b}_m\,\rho(b_{n-m}^{(m)}) = 
\tilde{b}_m\,(r_1 \,b_{n-m}^{(m)}+ r_0),    &\qquad & 0\le m \le n,\\
c^{(m)}_{2,n-m} &=& \tilde{b}_m\,r_1\,c_{n-m}^{(m)},         &\qquad & 0\le m \le n-1,\\
\end{array}
\end{equation}

\bigskip

\noindent
\emph{Case II}: if $\rho(x)$ is the square root of a polynomial of degree at most 2, then
\begin{equation}\label{caseII}
\begin{array}{lllll}
a^{(m)}_{2,n-m} &=& 0, \quad & 0\le m \le n,\\
b^{(m)}_{2,n-m} &=& 0, \quad & 0\le m \le n,\\
c^{(m)}_{2,n-m} &=& 0, \quad & 0\le m\le n-1.
\end{array}
\end{equation}
\end{theorem}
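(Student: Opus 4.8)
The plan is to follow the template of Theorem \ref{xr3t}: I would fix an entry $P_{n,m}$ of $\mathbb{P}_n$, multiply it by $y$, and rewrite the result as a combination of entries of $\mathbb{P}_{n+1}$, $\mathbb{P}_n$ and $\mathbb{P}_{n-1}$, reading off the matrix entries at the end. The starting observation is the factorization $y = \rho(x)\cdot\frac{y}{\rho(x)}$, which lets me apply the univariate recurrence \eqref{recurrence2} to $q_m$ evaluated at $y/\rho(x)$:
\begin{equation*}
y\,P_{n,m} = p_{n-m}^{(m)}(x)\,\rho(x)^{m+1}\Big[\tilde a_m\,q_{m+1}+\tilde b_m\,q_m+\tilde c_m\,q_{m-1}\Big]\Big(\tfrac{y}{\rho(x)}\Big).
\end{equation*}
This produces three groups of terms, carrying $q_{m+1}$, $q_m$ and $q_{m-1}$; since the second index shifts by at most one, the coefficient matrices will be tridiagonal once everything is written in the vector form \eqref{**}.

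I would treat the three groups separately, in each case matching the power of $\rho(x)$ against the definition \eqref{kops}. For the $q_{m+1}$ group the factor $\rho(x)^{m+1}$ already matches $P_{\cdot,m+1}$, so I only need to expand $p_{n-m}^{(m)}(x)$ in the basis $\{p_i^{(m+1)}\}$; this is precisely \eqref{eq:recurrel1}, and substitution yields $\tilde a_m\big[\delta_{n-m}^{(m)}P_{n+1,m+1}+\epsilon_{n-m}^{(m)}P_{n,m+1}+\zeta_{n-m}^{(m)}P_{n-1,m+1}\big]$, contributing the super-diagonals $a^{(m)}_{3,\cdot}$, $b^{(m)}_{3,\cdot}$, $c^{(m)}_{3,\cdot}$. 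For the $q_{m-1}$ group I first write $\rho(x)^{m+1}=\rho(x)^2\,\rho(x)^{m-1}$ and expand $\rho(x)^2 p_{n-m}^{(m)}(x)$ in the basis $\{p_i^{(m-1)}\}$ using \eqref{eq:recurrel2} with $m$ replaced by $m-1$; this produces $\tilde c_m\big[\eta_{n-m}^{(m-1)}P_{n+1,m-1}+\theta_{n-m}^{(m-1)}P_{n,m-1}+\vartheta_{n-m}^{(m-1)}P_{n-1,m-1}\big]$, contributing the sub-diagonals $a^{(m)}_{1,\cdot}$, $b^{(m)}_{1,\cdot}$, $c^{(m)}_{1,\cdot}$. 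The diagonal $q_m$ group is where the two cases split: in \emph{Case II} the symmetry of $v_y$ gives $\tilde b_m=0$ for all $m$, so the group vanishes and one obtains \eqref{caseII}; in \emph{Case I} I would write $\rho(x)^{m+1}=(r_1x+r_0)\rho(x)^m$ and apply the recurrence \eqref{recurrence1} to $x\,p_{n-m}^{(m)}(x)$, which turns $(r_1x+r_0)p_{n-m}^{(m)}$ into a combination of $p_{n-m\pm1}^{(m)}$ and $p_{n-m}^{(m)}$ and yields the central entries $a^{(m)}_{2,\cdot}$, $b^{(m)}_{2,\cdot}=\tilde b_m\,\rho(b_{n-m}^{(m)})$, $c^{(m)}_{2,\cdot}$ of \eqref{e_2}.

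Collecting the three groups and letting $m$ run over $0,1,\dots,n$ assembles the full tridiagonal matrices. I expect the only genuinely delicate step to be the boundary bookkeeping: one must track, for each coefficient, whether the univariate subscript $n-m$ together with the shift in $m$ still lands on an existing polynomial, since this is exactly what fixes the stated index ranges. In particular, the $q_{m-1}$ group is absent at $m=0$ (so all subscript-$1$ coefficients require $m\ge1$), and the $\epsilon$-, $\zeta$-terms of the $q_{m+1}$ group drop out for $m$ near $n$ because $p_{-1}^{(m+1)}=p_{-2}^{(m+1)}=0$. Care is especially needed for the last row of $C_{n,2}$: my derivation gives the $P_{n-1,n-1}$-coefficient of $y\,P_{n,n}$ as $\tilde c_n\,\vartheta_{0}^{(n-1)}$, which is in general nonzero, so I would double-check this boundary entry against the stated form of $C_{n,2}$ and, if needed, against the alternative $C_{n,2}=H_nA_{n-1,2}^{t}H_{n-1}^{-1}$ coming from the general three term relation. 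Apart from this, the computation is an immediate consequence of Lemma \ref{lemma31} and the two univariate recurrences.
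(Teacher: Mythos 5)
Your proposal is correct and is essentially the paper's own proof: the same expansion of $y\,P_{n,m}$ via \eqref{recurrence2}, the same use of Lemma \ref{lemma31} for the $q_{m+1}$ and $q_{m-1}$ groups (the latter with $m$ replaced by $m-1$ in \eqref{eq:recurrel2}), and the same Case I/Case II treatment of the middle group via \eqref{recurrence1} and the symmetry of $v_y$.

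The boundary concern you flag deserves an answer, and it is resolved in favor of your computation. The coefficient of $P_{n-1,n-1}$ in $y\,P_{n,n}$ is $\tilde c_n\,\vartheta_0^{(n-1)}$, and since $\vartheta_0^{(n-1)}=\delta_0^{(n-1)}\,h_0^{(n)}/h_0^{(n-1)}=h_0^{(n)}/h_0^{(n-1)}\neq 0$ and $\tilde c_n\neq 0$, this entry does not vanish. Hence the all-zero last row displayed in \eqref{eq:esucn}, equivalently the restriction $1\le m\le n-1$ in \eqref{Cn,2}, is a slip in the statement: the correct range is $1\le m\le n$, the entry of $C_{n,2}$ in position $(n,n-1)$ being $c^{(n)}_{1,0}=\tilde c_n\,\vartheta_0^{(n-1)}$ (the matrix is, of course, still tridiagonal). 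Your proposed cross-check settles this: since $H_{n-1}$ and $H_n$ are diagonal here, $C_{n,2}H_{n-1}=H_n A_{n-1,2}^t$ gives $(C_{n,2})_{n,n-1}\,\mathbf{h}_{n-1,n-1}=\mathbf{h}_{n,n}\,(A_{n-1,2})_{n-1,n}=\mathbf{h}_{n,n}\,\tilde a_{n-1}\,\delta_0^{(n-1)}\neq 0$, which agrees exactly with $\tilde c_n\,\vartheta_0^{(n-1)}$ after using $\tilde c_n=\tilde a_{n-1}\tilde h_n/\tilde h_{n-1}$ and $\mathbf{h}_{n,m}=h^{(m)}_{n-m}\tilde h_m$. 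Two further pieces of evidence that this is a typo rather than an actual vanishing: the analogous sub-diagonal ranges in \eqref{An,2} and \eqref{Bn,2} do extend to $m=n$, and in the disk example of Section \ref{5} the stated formula for $c^{(m)}_{1,n-m}$, evaluated at $m=n$, equals precisely $\tilde c_n\,\vartheta_0^{(n-1)}$. The situation differs from the first three term relation, where the last row of $C_{n,1}$ genuinely vanishes because $x\,p_0^{(n)}$ contains no term in $p_{-1}^{(n)}$; this contrast is presumably the origin of the slip. No other entry, range, or structural claim in the theorem is affected.
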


\bigskip

\begin{proof}
Let us multiply (\ref{kops}) times $y$, then from (\ref{recurrence2}),  we get

\begin{eqnarray}\label{eq:bigrecurrel}
y\,P_{n,m}(x,y)  &=&  \tilde{a}_{m}\,p_{n-m}^{(m)}(x)\,\rho(x)^{m+1}\,q_{m+1}\left(\frac{y}{\rho(x)}\right)
\nonumber\\
& ~ & + \tilde{b}_m\,\rho(x)\,p_{n-m}^{(m)}(x)\,\rho(x)^m\,q_m\left(\frac{y}{\rho(x)}\right) \nonumber \\
& ~ & + \tilde{c}_{m}\,\rho(x)^2\,p_{n-m}^{(m)}(x)\,\rho(x)^{m-1}\,q_{m-1}\left(\frac{y}{\rho(x)}\right).
\end{eqnarray}
Each term of the sum will be considered individually. First, using (\ref{eq:recurrel1}), we obtain
\begin{eqnarray*}
\lefteqn{p_{n-m}^{(m)}(x)\rho(x)^{m+1}q_{m+1}\left(\frac{y}{\rho(x)}\right)} \\
&=&\left[\delta_{n-m}^{(m)}\,p_{n-m}^{(m+1)}(x) + \epsilon_{n-m}^{(m)}\,p_{n-m-1}^{(m+1)}(x) +
\zeta_{n-m}^{(m)}\,p_{n-m-2}^{(m+1)}(x)\right]\\
&~& \times\,\rho(x)^{m+1}\,q_{m+1}\left(\frac{y}{\rho(x)}\right)\\
&=&\delta_{n-m}^{(m)}\,P_{n+1,m+1} + \epsilon_{n-m}^{(m)}\,P_{n,m+1} + 
\zeta_{n-m}^{(m)}\,P_{n-1,m+1},
\end{eqnarray*}
where we omit the variables $(x,y)$ for simplicity.

Now, we consider the second term of the sum in (\ref{eq:bigrecurrel}). If $\rho(x)$ is the square root of 
a polynomial of degree no greater than $2$, then $v_y$ is a symmetric moment functional, and therefore, 
$\tilde{b}_m=0$ for all non--negative integers $m$.

Suppose that $\rho(x)$ is a polynomial of degree $\le 1$, that is, $\rho(x) = 
r_1\,x+r_0$, with $|r_1| + |r_0| >0$. In this case, using (\ref{recurrence1})
\begin{eqnarray*}
\lefteqn{\rho(x)\,p_{n-m}^{(m)}(x)\,\rho(x)^m\,q_m\left(\frac{y}{\rho(x)}\right)} \\
&=& \left\{r_1 \left[a_{n-m}^{(m)}\,p_{n-m+1}^{(m)}(x) + b_{n-m}^{(m)} \, 
p_{n-m}^{(m)}(x) + c_{n-m}^{(m)}\,p_{n-m-1}^{(m)}(x)\right] \right.\\
&~& \qquad + \left.r_0\,
p_{n-m}^{(m)}(x)\right\}\, \rho(x)^m\,q_m\left(\frac{y}{\rho(x)}\right) \\
&=& r_1\,a_{n-m}^{(m)}\,P_{n+1,m} + (r_1\,b_{n-m}^{(m)} + r_0)P_{n,m} + r_1\,c_{n-m}^{(m)}\,
P_{n-1,m}.
\end{eqnarray*}
Next, for $m\ge 1$, including (\ref{eq:recurrel2}) in the third term of the sum in 
\eqref{eq:bigrecurrel}, we deduce
\begin{eqnarray*}
\lefteqn{\rho(x)^2\,p_{n-m}^{(m)}(x)\,\rho(x)^{m-1}\,q_{m-1}\left(\frac{y}{\rho(x)}\right)} \\
&=&\left[\eta_{n-m}^{(m-1)}\, p_{n-m+2}^{(m-1)}(x) +
\theta_{n-m}^{(m-1)} \,p_{n-m+1}^{(m-1)}(x) + \vartheta_{n-m}^{(m-1)}\,p_{n-m}^{(m-1)}(x)\right] \\
&~& \times\, \rho(x)^{m-1}\,q_{m-1}\left(\frac{y}{\rho(x)}\right)\\
&=& \eta_{n-m}^{(m-1)}\,P_{n+1,m-1} + \theta_{n-m}^{(m-1)}\,P_{n,m-1} +
\vartheta_{n-m}^{(m-1)}\,P_{n-1,m-1}.
\end{eqnarray*}
Finally, replace the above expressions into (\ref{eq:bigrecurrel}), and get
\begin{eqnarray*}
y\,P_{n,m}
&=& \tilde{c}_{m}\,\eta_{n-m}^{(m-1)}\,P_{n+1,m-1} + \tilde{b}_{m}\,r_1\,a_{n-m}^{(m)}\,
P_{n+1,m} + \tilde{a}_{m}\,\delta_{n-m}^{(m)}\,P_{n+1,m+1}\\
&~& + \tilde{c}_{m}\,\theta_{n-m}^{(m-1)}\,P_{n,m-1} + \tilde{b}_{m}(r_1\,b_{n-m}^{(m)}+r_0)
P_{n,m} + \tilde{a}_{m}\,\epsilon_{n-m}^{(m)}\,P_{n,m+1}\\
&~& + \tilde{c}_{m}\,\vartheta_{n-m}^{(m-1)}\,P_{n-1,m-1} + \tilde{b}_{m}\,r_1\,c_{n-m}^{(m)}\,
P_{n-1,m} + \tilde{a}_{m}\,\zeta_{n-m}^{(m)}\,P_{n-1,m+1}.
\end{eqnarray*}
\end{proof}

\bigskip

\section{Some examples of bivariate polynomials}\label{5}

In this final section we apply our results to several examples of bivariate orthogonal 
polynomials generated by method described in (\ref{kops}). First, we recover the three term relations for
the four classes of orthogonal polynomials in two variables considered in \cite{Koor75}. These classes
were called by Koornwinder \textit{Class II}, \textit{Class III}, \textit{Class IV} and
\textit{Class V}. The three term relations in \textit{Classes II}, \textit{IV} and \textit{V} were 
explicitly deduced in \cite{X94} in the orthonormal case but using
a different technique.

Moreover, we study the matrix coefficients in the three term relations for other examples 
of bivariate polynomials. First, we study a
new example of bivariate polynomials included in \cite{FPP12}, and finally, we 
deduce three term relations for non positive--definite bivariate classical orthogonal polynomials 
considered in \cite{KLL01} as solutions of a Krall and Sheffer's partial differential equation
(\cite{KS67}) generated by means of this technique.

\bigskip

\subsection{Class II: Orthogonal polynomials on the unit disk}\phantom{fohsf }

In this class, T. Koornwinder constructed the classical orthogonal polynomials in two variables 
on the unit disk, 
$${\bf{B}}^2=\{(x,y)\in\mathbb{R}^2/ x^2 + y^2 \le 1\},$$
the so--called {\it ball polynomials}, associated with the weight function
$$W^{(\mu)}(x,y) = (1-x^2-y^2)^{\mu-1/2}, \qquad \mu > -\frac{1}{2},$$
taking
$$
w_1(x) = w_2(x) = w^{(\mu-1/2,\mu-1/2)}(x) = (1-x^2)^{\mu-1/2},\quad x\in [-1,1],
$$
and $\rho(x) = \sqrt{1-x^2}$, so, we are in Case II. In this case, for $m\ge 0$, we have
$$\rho(x)^{2m+1}w_1(x) = w^{(\mu+m,\mu+m)}(x).$$
Here all of the involved weight functions are symmetric, and, consequently,
$$
b_n^{(m)} = 0, \qquad n, m\ge 0.
$$
Then, orthogonal polynomials on the ball can be defined as
$$P^{(\mu)}_{n,m}(x,y) = P^{(\mu+m,\mu+m)}_{n-m}(x)\,\left(\sqrt{1-x^2}\right)^{m}\,
P^{(\mu-1/2,\mu-1/2)}_{m}
\left(\frac{y}{\sqrt{1-x^2}}\right).$$
In this case, we have $B_{n,1} \equiv B_{n,2} \equiv 0$, and therefore, ball polynomials 
are centrally symmetric.

Following Theorem \ref{xr3t} for the first three term relation and three term recurrence relation 
for classical Jacobi polynomials \eqref{TTRR-J}, the entries of the diagonal 
matrices $A_{n,1}$ and $C_{n,1}$ are respectively given by
$$
\begin{array}{llll}
a_{n-m}^{(m)} &=&  
\dfrac{(n-m+1)\,(n+m+2\mu+1)}{(n+\mu+1)(2n+2\mu+1)},\quad & 0\le m\le n, \\
\\
c_{n-m}^{(m)} &=&  
\dfrac{n+\mu}{2n+2\mu+1},\quad & 1\le m\le n.
\end{array}
$$
On the other hand, using Theorem \ref{yr3t} and the symmetry, we get that the tridiagonal matrices
$A_{n,2}$ and $C_{n,2}$ have zero elements in their diagonals, and again from \eqref{TTRR-J}, 
\eqref{ADJ1-J}, \eqref{ADJ2-J}, we get
$$\begin{array}{lcll}
a_{1,n-m}^{(m)} &=& -\dfrac{(m+\mu-1/2)(n-m+1)_2}{(m+\mu)(n+\mu+1)(2n+2\mu+1)}, \quad &1\le m\le n,\\
\\
a_{3,n-m}^{(m)} &=& \dfrac{(m+1)(m+2\mu)(n+m+2\mu+1)_2}{(2m+2\mu)_2(2n+2\mu+1)(n+\mu+1)},
&0\le m\le n-1,\\
\\
c_{1,n-m}^{(m)} &=& \dfrac{(m+\mu-1/2)(n+\mu)}{(m+\mu)(2n+2\mu+1)}, &1\le m\le n-1,\\
\\
c_{3,n-m}^{(m)} &=& -\dfrac{(m+1)(m+2\mu)(n+\mu)}{(2m+2\mu)_2(2n+2\mu+1)}, &0\le m\le n-1,
\end{array}$$
where
$$(\nu)_{0}=1, \quad (\nu)_{k} = \nu\,(\nu+1)\cdots(\nu+k-1), \quad k\ge 1,
$$ 
denotes the usual Pochhammer symbol.

\bigskip

\subsection{Class III: Orthogonal polynomials on the parabolic biangle}

Class III of bivariate Koornwinder polynomials is obtained by taking Jacobi polynomials on $[0,1]$ 
and Gegenbauer polynomials
\begin{eqnarray*}
w_1(x) &=& (1-x)^\alpha\,x^\beta, \quad x\in [0,1], \quad \alpha, \beta >-1,\\
w_2(y) &=& (1-y^2)^\beta, \quad y\in [-1,1], \quad \beta >-1,\\
\rho(x) &=& \sqrt{x}.
\end{eqnarray*}
Therefore polynomials in two variables defined by
$$
P_{n,m}^{(\alpha, \beta)}(x,y) = P^{(\alpha, \beta+m+1/2)}_{n-m}(2x-1)\,\left(\sqrt{x}\right)^{m}\, 
P_m^{(\beta,\beta)}\left(\frac{y}{\sqrt{x}}\right),
$$
for $\alpha, \beta >-1$, and $0\le m\le n$, are orthogonal with respect to the weight function
$$
W(x,y) = (1-x)^{\alpha}\,(x-y^2)^{\beta},
$$
over the parabolic biangle $\Omega = \{(x,y)\in \mathbb{R}/ y^2 < x < 1\}$.

\medskip

In this case, 
$A_{n,1}$, $B_{n,1}$ and $C_{n,1}$ are full rank diagonal matrix, whose entries are given by
$$\begin{array}{llll}
a_{n-m}^{(m)} &=& \dfrac{(n-m+1)(n+\alpha+\beta+3/2)}{(2n-m+\alpha+\beta+3/2)_2}
,\quad & 0\le m\le n,\\
\\
b_{n-m}^{(m)} &=&  \dfrac{(n+\beta+3/2)(n-m+1)}{2n-m+\alpha+\beta+5/2} - \dfrac{(n+\beta+1/2)(n-m)}{2n-
m+\alpha+\beta+1/2},\quad & 0\le m\le n,\\
\\
c_{n-m}^{(m)} &=&  \dfrac{(n-m+\alpha)(n+\beta+1/2)}{(2n-m+\alpha+\beta+1/2)_2},\quad & 1\le m\le n.
\end{array}$$
For the second three term relation, using \eqref{TTRR-J},\eqref{ADJ1-J-[0,1]}, \eqref{ADJ2-J-[0,1]}, 
 we get
$$a_{1,n-m}^{(m)} = a_{2,n-m}^{(m)} = b_{2,n-m}^{(m)} = c_{2,n-m}^{(m)} = c_{3,n-m}^{(m)} = 0,
$$
and
$$
\begin{array}{llll}
a_{3,n-m}^{(m)} &=& \dfrac{2(m+1)(m+2\beta+1)(n+\alpha+\beta+3/2)}{(2m+2\beta+1)_2(2n-m+\alpha+\beta
+3/2)},\quad & 0\le m\le n,\\
\\
b_{1,n-m}^{(m)} &=& \dfrac{(m+\beta)(n-m+1)}{(2m+2\beta+1)(2n-m+\alpha+\beta+3/2)}, &1\le m\le n,\\
\\
b_{3,n-m}^{(m)} &=& \dfrac{2(m+1)(m+2\beta+1)(n-m+\alpha)}{(2m+2\beta+1)_2(2n-m+\alpha+\beta+3/2)},
&0\le m\le n-1,\\
\\
c_{1,n-m}^{(m)} &=& \dfrac{(m+\beta)(n+\beta+1/2)}{(2m+2\beta+1)(2n-m+\alpha+\beta+3/2)}, &1\le m\le n-1.
\end{array}
$$

\bigskip

\subsection{Class IV: Orthogonal polynomials on the simplex}

Simplex polynomials can also be constructed using Agahanov's technique. In this case, for 
$\alpha, \beta, \gamma > -1$, we consider:
\begin{eqnarray*}
w_1(x) &=&  (1-x)^{\beta+\gamma}\,x^\alpha, \quad x\in [0,1],\\
w_2(y) &=&  (1-y)^\gamma\,y^\beta, \quad y\in [0,1],\\
\rho(x) &=& 1-x,
\end{eqnarray*}
and define the bivariate polynomials
$$
P^{(\alpha,\beta,\gamma)}_{n,m}(x,y) = P^{(
\beta+\gamma+2m+1,\alpha)}_{n-m}(2x-1)\,
(1-x)^{m}\,P^{(\gamma,\beta)}_{m}\left(\frac{2\,y}{1-x}-1\right),
$$
for $0\le m\le n$, which are orthogonal with respect to the weight function
$$W^{(\alpha, \beta,\gamma)}(x,y) = x^{\alpha}\,y^\beta\,(1-x-y)^\gamma,
$$
on the simplex $\mathbf{T}^2=\{(x,y)\in\mathbb{R}^2/ x\ge 0, y \ge 0, 1-x-y\ge 0\}$.

Using the three term recurrence relation for 
Jacobi polynomials on $[0,1]$ \eqref{TTRR-J-[0,1]} and Theorem \ref{xr3t}, we get
$$
\begin{array}{llll}
a_{n-m}^{(m)} &=& \dfrac{(n-m+1)\,(n+m+\alpha+\beta+\gamma +2)}{(2n+\alpha+\beta+\gamma+2)_2},
 &0\le m\le n,\\
\\
b_{n-m}^{(m)} &=& \dfrac{(n-m+\alpha+1)(n-m+1)}{2n+\alpha+\beta+\gamma+3} - 
\dfrac{(n-m+\alpha)(n-m)}{2n+\alpha+\beta+\gamma+1}, &0\le m\le n,\\
\\
c_{n-m}^{(m)} &=& \dfrac{(n+m+\beta+\gamma+1)\,(n-m+\alpha)}{(2n+\alpha+\beta+\gamma+1)_2}, 
&\hspace{-0.5cm}0\le m\le n-1.
\end{array}
$$
Now, following Theorem \ref{yr3t}, the entries of matrix coefficients for the second 
three term relation for simplex polynomials can be deduced from \eqref{TTRR-J-[0,1]},
\eqref{ADJ11-J-[0,1]} and \eqref{ADJ22-J-[0,1]}, and they are given by
\begin{eqnarray*}
a_{1,n-m}^{(m)} &=& \dfrac{(m+\beta)(m+\gamma)(n-m+1)_2}{(2m+\beta+\gamma)_2\,(2n+\alpha+\beta+\gamma+2)_2},
\\
\\
a_{2,n-m}^{(m)} &=& \lambda_m^{(\beta,\gamma)}\,
\dfrac{(n-m+1)(n+m+\alpha+\beta+2)}{(2n+\alpha+\beta+2)_2},
\\
\\
a_{3,n-m}^{(m)} &=& \dfrac{(m+1)(m+\beta+\gamma+1)(n+m+\alpha+\beta+2)_2}{(2m+\beta+\gamma+1)_2
(2n+\alpha+\beta+2)_2},\\
\\
b_{1,n-m}^{(m)} &=& \dfrac{-2(m+\beta)(m+\gamma)(n-m+1)(n+m+\beta+\gamma+1)}{(2m+\beta+\gamma)_2
(2n+\alpha+\beta+\gamma+1)(2n+\alpha+\beta+\gamma+3)}, \\
\\
b_{2,n-m}^{(m)} &=& -\lambda_m^{(\beta,\gamma)}
\left(1-\dfrac{(n-m+\alpha+1)(n-m+1)}{2n+\alpha+\beta+3}+\dfrac{(n-m+\alpha)(n-m)}{2n+\alpha+\beta+1} \right),\\
\\
b_{3,n-m}^{(m)}&=&\dfrac{-2(m+1)(m+\beta+\gamma+1)(n-m+\alpha)(n+m+\alpha+\beta+\gamma+2)
}{(2m+\beta+\gamma+1)_2(2n+\alpha+\beta+\gamma+1)(2n+\alpha+\beta+\gamma+3)},\\
\\
c_{1,n-m}^{(m)}&=&\dfrac{(m+\beta)(m+\gamma)(n+m+\beta+\gamma)_2}{(2m+\beta+\gamma)_2
(2n+\alpha+\beta+\gamma+1)_2},\\
\\
c_{2,n-m}^{(m)}&=&\lambda_m^{(\beta,\gamma)}\dfrac{(n-m+\alpha)(n+m+\beta+\gamma+1)}{
(2n+\alpha+\beta+\gamma+1)_2},\\
\\
c_{3,n-m}^{(m)}&=&\dfrac{(m+1)(m+\beta+\gamma+1)(n-m+\alpha)_2}{
(2m+\beta+\gamma+1)_2(2n+\alpha+\beta+\gamma+1)_2}, 
\end{eqnarray*}
where
$$\lambda_m^{(\beta,\gamma)} = \frac{(m+\beta)m}{2m+\beta+\gamma}-\frac{(m+\beta+1)(m+1)}{2m+\beta+\gamma+2}.$$

\bigskip

\subsection{Class V: Orthogonal polynomials on the square}

Class V of Koornwinder polynomials corresponds with the tensor product of Jacobi polynomials. For 
$0\le m\le n$, the bivariate polynomials
$$
P_{n,m}^{(\alpha, \beta, \gamma, \delta)}(x,y)=P^{(\alpha,\beta)}_{n-m}(x)\,P^{(\gamma,\delta)}_m(y),
$$
are orthogonal with respect to the weight function
$$
W(x,y)=w^{(\alpha,\beta)}(x)\,w^{(\gamma,\delta)}(y),
$$
for $\alpha, \beta, \gamma, \delta >-1$ on the square $[-1,1]\times[-1,1]$, taking $\rho(x)=1$. 

\medskip

In this case, all of the matrix coefficients are diagonal matrices. From \eqref{TTRR-J}, and Theorems 
\ref{xr3t} and \ref{yr3t}, we get for the first 
three term relation
\begin{eqnarray*}
a_{n-m}^{(m)} &=& a_{n-m}^{(\alpha, \beta)}, \quad 0\le m\le n,\\
b_{n-m}^{(m)} &=& b_{n-m}^{(\alpha, \beta)}, \quad 0\le m\le n,\\
c_{n-m}^{(m)} &=& c_{n-m}^{(\alpha, \beta)}, \quad 0\le m\le n-1,
\end{eqnarray*}
and for the second three term relation 
$$a_{1,n-m}^{(m)} = a_{2,n-m}^{(m)} = b_{1,n-m}^{(m)} = b_{3,n-m}^{(m)} = c_{2,n-m}^{(m)}
= c_{3,n-m}^{(m)} =0,
$$
and
\begin{eqnarray*}
a_{3,n-m}^{(m)} &=& a_m^{(\gamma, \delta)}, \quad 0\le m\le n,\\
b_{2,n-m}^{(m)} &=& b_m^{(\gamma, \delta)}, \quad 0\le m\le n,\\
c_{1,n-m}^{(m)} &=& c_m^{(\gamma, \delta)}, \quad 1\le m\le n-1,
\end{eqnarray*}
whose explicit expressions can be seen in \eqref{TTRR-J}.

\bigskip

\subsection{Laguerre--Jacobi two variable orthogonal polynomials}

Using Aga\-ha\-nov's technique, new examples of bivariate orthogonal polynomials were given in 
\cite{FPP12}. Here we study three term relations for \textit{Example 1} in \cite{FPP12}. 
Consider the univariate Laguerre and Jacobi weight functions
\begin{eqnarray*}
w_1(x) &=& x^\alpha \,\mathrm{e}^{-x}, \quad x\in [0,+\infty),\quad \alpha >-1,\\
w_2(y) &=& (1-y)^\beta, \quad y\in [-1,1], \quad \beta>-1,\\
\rho(x) &=& x,
\end{eqnarray*}
and denote by $\{L_n^{(\alpha)}\}_{n\ge0}$ the univariate orthogonal Laguerre polynomials
associated with the weight function $w_1(x)$.

The polynomials
$$P_{n,m}(x,y) = L^{(\alpha+2m+1)}_{n-m}(x)\,x^m\,P^{(\beta,0)}_m\left(\frac{y}{x}\right), \quad 0\le m\le n,$$
are orthogonal with respect to the weight function
$$W(x,y) = x^{\alpha-\beta}\,(x-y)^\beta\,\mathrm{e}^{-x},$$
on the unbounded region $\{(x,y)\in \mathbb{R} / -x < y < x, x>0\}$.

Using Theorem \ref{xr3t}, and \eqref{TTRR-L}, the entries of the matrix coefficients of the first three 
term relation are
$$\begin{array}{llll}
a_{n-m}^{(m)} &=& - (n-m+1),  & 0\le m\le n,\\
b_{n-m}^{(m)} &=& 2n+\alpha+2,  & 0\le m\le n,\\
c_{n-m}^{(m)} &=& - (n+m+\alpha+1), & 0\le m\le n-1.
\end{array}$$
From \eqref{TTRR-J}, \eqref{TTRR-L}, \eqref{ADJ7-L},  \eqref{ADJ8-L}, and using Theorem \ref{yr3t}, 
we can deduce the matrix coefficients for the second three term relation for the new example, and 
they are given by
$$\begin{array}{llll}
a_{1,n-m}^{(m)} &=& \dfrac{2m(m+\beta)(n-m+1)_2}{(2m+\beta)_2}, &1\le m\le n,\\
\\
a_{2,n-m}^{(m)} &=& \dfrac{\beta^2(n-m+1)}{(2m+\beta)(2m+\beta+2)}, &0\le m\le n,\\
\\
a_{3,n-m}^{(m)} &=& \dfrac{2(m+1)(m+\beta+1)}{(2m+\beta+1)_2}, &0\le m\le n,\\
\\
b_{1,n-m}^{(m)} &=& -\dfrac{4m(m+\beta)(n+m+\alpha+1)(n-m+1)}{(2m+\beta)_2},&1\le m\le n,\\
\\
b_{2,n-m}^{(m)} &=& -\dfrac{\beta^2(2n+\alpha+2)}{(2m+\beta)(2m+\beta+2)},&0\le m\le n,\\
\\
b_{3,n-m}^{(m)} &=& -\dfrac{4(m+1)(m+\beta+1)}{(2m+\beta+1)_2},&0\le m\le n-1,\\
\\
c_{1,n-m}^{(m)} &=& \dfrac{2m(m+\beta)(n+m+\alpha)_2}{(2m+\beta)_2},&1\le m\le n-1,\\
\\
c_{2,n-m}^{(m)} &=& \dfrac{\beta^2(n+m+\alpha+1)}{(2m+\beta)(2m+\beta+2)},&0\le m\le n-1,\\
\\
c_{3,n-m}^{(m)} &=&  \dfrac{2(m+1)(m+\beta+1)}{(2m+\beta+1)_2},&0\le m\le n-2.
\end{array}$$

\bigskip

\subsection{Bessel--Laguerre two variable orthogonal polynomials}

Several examples of non--positive definite bivariate orthogonal polynomials generated 
by means of Agahanov's tool can be found in \cite{KLL01} as solutions of 
several Krall and Sheffer's partial differential equations (\cite{KS67}). In particular, Kwon, Lee and
Littlejohn considered the Krall and Sheffer's partial differential equation (5.55)
$$
x^2\,u_{xx} + 2\, x \,y \,u_{xy} + (y^2-y)\,u_{yy} +g(x-1)\,u_x + g(y-\gamma)u_y = n\,(n+g-1)u,
$$
and proved that it has an OPS $\{P_{n,m}(x,y): 0\le m\le n\}_{n\ge0}$ as solutions, 
which cannot be positive--definite, if $g+n\neq 0$ and $g\,\gamma+n\neq 0$, for $n\ge 0$. Here
$$
P_{n,m}(x,y) = B_{n-m}^{(g+2m, -g)}(x)\,x^m\,L_{m}^{(g\gamma-1)}\left(\frac{g\,y}{x}\right),
$$
are given in terms of Bessel and Laguerre polynomials. Observe that we are in \textit{Case I}  
taking $\rho(x) = x/g$, but we can omit the factor $g^{-m}$ 
since the orthogonality is preserved except for a multiplicative factor.

\bigskip

Thus, from Theorem \ref{xr3t} we obtain the explicit expressions for the entries in the 
matrix coefficients
for the first three term relation
$$\begin{array}{llll}
a_{n-m}^{(m)} &=& \dfrac{(n+m+g-1)(-g)}{(2n+g-1)(2n+g)},  & 0\le m\le n,\\
b_{n-m}^{(m)} &=& \dfrac{(2m+g-2)(-g)}{(2n+g-2)(2n+g)},  & 0\le m\le n,\\
c_{n-m}^{(m)} &=& \dfrac{(n-m)\,g}{(2n+g-2)(2n+g-1)}, & 0\le m\le n-1.
\end{array}
$$
For the second three term relation, Theorem \ref{yr3t} provides the entries in the matrix coefficients as
follows,
$$\begin{array}{llll}
a_{1,n-m}^{(m)} &=& - \dfrac{(m+g\gamma-1)\,g^2}{(2n+g-1)_2}, &1\le m\le n,\\
\\
a_{2,n-m}^{(m)} &=& - \dfrac{(2m+g\gamma)\,(n+m+g-1)}{(2n+g-1)_2}, &0\le m\le n,\\
\\
a_{3,n-m}^{(m)} &=& - \dfrac{(m+1)\,(n+m+g-1)_2}{(2n+g-1)_2}, &0\le m\le n,\\
\\
b_{1,n-m}^{(m)} &=& \dfrac{2\,(m+g\gamma-1)\,(g+2m-2)_2}{(2n+g-2)\,(2n+g)}, &1\le m\le n,\\
\\
b_{2,n-m}^{(m)} &=& \dfrac{(2m+g\gamma)\,(g+2m-2)}{(2n+g-2)\,(2n+g)}, &0\le m\le n,\\
\\
b_{3,n-m}^{(m)} &=& - \dfrac{2\,(m+1)\,(n-m)\,(n+m+g-1)}{(2n+g-2)\,(2n+g)}, &0\le m\le n-1,\\
\\
c_{1,n-m}^{(m)} &=& -\dfrac{(m+g\gamma-1)\,(g+2m-2)_2}{(2n+g-2)_2}, &1\le m\le n-1,\\
\\
c_{2,n-m}^{(m)} &=& \dfrac{(2m+g\gamma)\,(n-m)}{(2n+g-2)_2}, &0\le m\le n-1,\\
\\
c_{3,n-m}^{(m)} &=&  - \dfrac{(m+1)\,(n-m-1)_2\,g^2}{(2n+g-2)_2\,(g+2m)_2}, &0\le m\le n-2.
\end{array}
$$

\bigskip

\appendix
\section{Formulas for univariate classical orthogonal polynomials}

In this Appendix, we recall the properties for univariate classical orthogonal polynomials
we have needed in the paper. These properties can be found or can be easily deduced 
from properties in some usual texts like \cite[Chapter 22]{AS72} and \cite[Chapters 4 \& 5]{Sz78}.
For Bessel polynomials we used \cite[Part II]{KF49}. 

\bigskip

\subsection{Classical Jacobi polynomials on $[-1,1]$}

Classical Jacobi polynomials in one variable are orthogonal with respect to the inner product
$$
\langle f,g \rangle_{J} = \int_{-1}^1 f(x)\, g(x)\,w^{(\alpha,\beta)}(x) \, dx,
$$
where the weight function is given by
$$
w^{(\alpha,\beta)}(x) = (1-x)^\alpha\,(1+x)^\beta, \qquad \alpha, \beta >-1,
$$
and the moment functional is given by
$$\langle u^{(\alpha,\beta)},x^n\rangle = \int_{-1}^1 x^n\,w^{(\alpha,\beta)}(x) \, dx.
$$
As usual, we denote by $\{P_n^{(\alpha, \beta)}\}_{n\ge0}$ the orthogonal polynomial sequence associated 
with $w^{(\alpha,\beta)}(x)$, the so--called {\it Jacobi polynomials}, normalized by the condition 
(formula (4.1.1), p. 58, \cite{Sz78})
$$ P_n^{(\alpha,\beta)}(1) = \binom{n+\alpha}{n}.
$$
Next, we collect the formulas that we have used in Section \ref{5}.

\bigskip

\noindent
\textbf{Three term recurrence relation}

\begin{equation}\label{TTRR-J}
x\,P_n^{(\alpha, \beta)}(x) = a_{n}^{(\alpha, \beta)}\,P_{n+1}^{(\alpha, \beta)}(x) +
b_n^{(\alpha, \beta)}\,P_n^{(\alpha, \beta)}(x)+
c_{n}^{(\alpha, \beta)}\,P_{n-1}^{(\alpha, \beta)}(x),
\end{equation}
where
\begin{eqnarray*}
a_n^{(\alpha, \beta)} &=& \frac{2\,(n+1)\,(n+\alpha+\beta+1)}{(2n+\alpha+\beta+1)\,(2n+\alpha+\beta+2)},\\
\\
b_n^{(\alpha, \beta)} &=& \frac{\beta^2-\alpha^2}{(2n+\alpha+\beta)\,(2n+\alpha+\beta+2)},\\
\\
c_n^{(\alpha, \beta)} &=& \frac{2\,(n+\alpha)\,(n+\beta)}{(2n+\alpha+\beta)\,(2n+\alpha+\beta+1)}.
\end{eqnarray*}

\bigskip

\noindent
\textbf{Relation between adjacent families (I)} (\cite[(22.7.18), (22.7.19), p. 782]{AS72})

\begin{equation}\label{ADJ1-J}
P_n^{(\alpha, \beta)}(x) = \delta_{n}^{(\alpha, \beta)}\,P_{n}^{(\alpha+1, \beta+1)}(x) +
\epsilon_{n}^{(\alpha, \beta)}\,P_{n-1}^{(\alpha+1, \beta+1)}(x)+
\zeta_{n}^{(\alpha, \beta)}\,P_{n-2}^{(\alpha+1, \beta+1)}(x),
\end{equation}
where
\begin{eqnarray*}
\delta_{n}^{(\alpha, \beta)} &=& \frac{(n+\alpha+\beta+1)\,(n+\alpha+\beta+2)}{(2n+\alpha+\beta+1)\,
 (2n+\alpha+\beta+2)},\\
\epsilon_{n}^{(\alpha, \beta)} &=& \frac{(\alpha-\beta)\,(n+\alpha+\beta+1)}{(2n+\alpha+\beta)\,(2n+\alpha+\beta+2)
},\\
\zeta_{n}^{(\alpha, \beta)} &=& -\frac{(n+\alpha)\,(n+\beta)}{(2n+\alpha+\beta)
\,(2n+\alpha+\beta+1)}.
\end{eqnarray*}

\bigskip

\noindent
\textbf{Relation between adjacent families (II)} (\cite[(4.5.5) p. 72]{Sz78})
\begin{equation}\label{ADJ2-J}
(1-x^2)\,P_n^{(\alpha+1, \beta+1)}(x) = \eta_{n}^{(\alpha, \beta)}\,P_{n+2}^{(\alpha, \beta)}(x) +
\theta_{n}^{(\alpha, \beta)}\,P_{n+1}^{(\alpha, \beta)}(x)+
\vartheta_{n}^{(\alpha, \beta)}\,P_{n}^{(\alpha, \beta)}(x),
\end{equation}
where
\begin{eqnarray*}
\eta_{n}^{(\alpha, \beta)} &=& -\frac{4\,(n+1)\,(n+2)}{(2n+\alpha+\beta+3)\,
 (2n+\alpha+\beta+4)},\\
\\
\theta_{n}^{(\alpha, \beta)} &=& \frac{4\,(\alpha-\beta)}{(2n+\alpha+\beta+2)\,
(2n+\alpha+\beta+4)},\\
\\
\vartheta_{n}^{(\alpha, \beta)} &=& \frac{4\,(n+\alpha+1)\,(n+\beta+1)}{(2n+\alpha+\beta+2)
\,(2n+\alpha+\beta+3)}.
\end{eqnarray*}

\bigskip

\subsection{Classical Jacobi polynomials on $[0,1]$}

Classical Jacobi polynomials can be defined in the interval $[0,1]$ by means of the
change of variable 
$$\bar{P}_n^{(\alpha, \beta)}(x) = P_n^{(\alpha, \beta)}(2x-1), \quad x\in[0,1].$$ 
In this case, the weight function is given by
$$
\bar{w}^{(\alpha,\beta)}(x) = (1-x)^\alpha\,x^\beta, \quad \alpha, \beta >-1,\quad x\in[0,1].
$$
We translate the needed properties for standard Jacobi polynomials to the interval $[0,1]$.

\bigskip

\noindent
\textbf{Three term recurrence relation}

\begin{equation}\label{TTRR-J-[0,1]}
x\,\bar{P}_n^{(\alpha, \beta)}(x) = \bar{a}_{n}^{(\alpha, \beta)}\,\bar{P}_{n+1}^{(\alpha, \beta)}(x) +
\bar{b}_n^{(\alpha, \beta)}\,\bar{P}_n^{(\alpha, \beta)}(x)+
\bar{c}_{n}^{(\alpha, \beta)}\,\bar{P}_{n-1}^{(\alpha, \beta)}(x),
\end{equation}
where
\begin{eqnarray*}
\bar{a}_n^{(\alpha, \beta)} &=& \frac{a^{(\alpha, \beta)}_n}{2} = 
\frac{(n+1)\,(n+\alpha+\beta+1)}{(2n+\alpha+\beta+1)\,
(2n+\alpha+\beta+2)},\\
\\
\bar{b}_n^{(\alpha, \beta)} &=& \frac{b^{(\alpha, \beta)}_n + 1}{2}=
\frac{(n+\beta+1)(n+1)}{2n+\alpha+\beta+2} - \frac{(n+\beta)\,n}{2n+\alpha+\beta},\\
\\
\bar{c}_n^{(\alpha, \beta)} &=& \frac{c^{(\alpha, \beta)}_n}{2} = \frac{(n+\alpha)\,
(n+\beta)}{(2n+\alpha+\beta)\,
(2n+\alpha+\beta+1)}.
\end{eqnarray*}

\bigskip

\noindent
\textbf{Relation between adjacent families (I)} (\cite[(22.7.19), p. 782]{AS72})
\begin{equation}\label{ADJ1-J-[0,1]}
\bar{P}_n^{(\alpha, \beta)}(x) = \bar{\delta}_{n}^{(\alpha, \beta)}\,P_{n}^{(\alpha, \beta+1)}(x) +
\bar{\epsilon}_{n}^{(\alpha, \beta)}\,\bar{P}_{n-1}^{(\alpha, \beta+1)}(x),
\end{equation}
where
$$
\bar{\delta}_{n}^{(\alpha, \beta)} = \frac{n+\alpha+\beta+1}{2n+\alpha+\beta+1},\qquad 
\bar{\epsilon}_{n}^{(\alpha, \beta)} = \frac{n+\alpha}{2n+\alpha+\beta+1}.
$$

\bigskip

\noindent
\textbf{Relation between adjacent families (II)} (\cite[(22.7.16), p. 782]{AS72})
\begin{equation}\label{ADJ2-J-[0,1]}
x\,\bar{P}_n^{(\alpha, \beta+1)}(x) = 
\bar{\theta}_{n}^{(\alpha, \beta)}\,\bar{P}_{n+1}^{(\alpha, \beta)}(x)+
\bar{\vartheta}_{n}^{(\alpha, \beta)}\,\bar{P}_{n}^{(\alpha, \beta)}(x),
\end{equation}
where
$$
\bar{\theta}_{n}^{(\alpha, \beta)} = \frac{n+1}{2n+\alpha+\beta+2},\qquad
\bar{\vartheta}_{n}^{(\alpha, \beta)} = \frac{n+\beta+1}{2n+\alpha+\beta+2}.
$$

\bigskip

\noindent
\textbf{Relation between adjacent families (III)} (\cite[(22.7.18), p. 782]{AS72})
\begin{equation}\label{ADJ11-J-[0,1]}
\bar{P}_n^{(\alpha, \beta)}(x) = \hat{\delta}_{n}^{(\alpha, \beta)}\,\bar{P}_{n}^{(\alpha+2, \beta)}(x) +
\hat{\epsilon}_{n}^{(\alpha, \beta)}\,\bar{P}_{n-1}^{(\alpha+2, \beta)}(x) + \hat{\zeta}_n^{(\alpha,\beta)}\,
\bar{P}_{n-2}^{(\alpha+2, \beta)}(x),
\end{equation}
where
\begin{eqnarray*}
\hat{\delta}_{n}^{(\alpha, \beta)} &=& \frac{(n+\alpha+\beta+1)(n+\alpha+\beta+2)}{(2n+\alpha+\beta
+1)(2n+\alpha+\beta+2)},\\
\\
\hat{\epsilon}_{n}^{(\alpha, \beta)} &=& -\frac{2(n+\beta)(n+\alpha+\beta+1)}{(2n+\alpha+\beta)
(2n+\alpha+\beta+2)}, \\
\\
\hat{\zeta}_n^{(\alpha,\beta)} &=& \frac{(n+\beta)(n+\beta-1)
}{(2n+\alpha+\beta)(2n+\alpha+\beta+1)}.
\end{eqnarray*}

\bigskip

\noindent
\textbf{Relation between adjacent families (IV)} (\cite[(22.7.15), p. 782]{AS72})
\begin{equation}\label{ADJ22-J-[0,1]}
(1-x)^2\,\bar{P}_n^{(\alpha+2, \beta)}(x) = \hat{\eta}_{n}^{(\alpha, \beta)}\,
\bar{P}_{n+2}^{(\alpha, \beta)}(x)+
\hat{\theta}_{n}^{(\alpha, \beta)}\,\bar{P}_{n+1}^{(\alpha, \beta)}(x)+
\hat{\vartheta}_{n}^{(\alpha, \beta)}\,\bar{P}_{n}^{(\alpha, \beta)}(x),
\end{equation}
where
\begin{eqnarray*}
\hat{\eta}_{n}^{(\alpha, \beta)} &=& \frac{(n+1)(n+2)}{(2n+\alpha+\beta+3)(2n+\alpha+\beta+4)},\\ 
\\
\hat{\theta}_{n}^{(\alpha, \beta)} &=& - \frac{2(n+1)(n+\alpha+2)}{(2n+\alpha+\beta+2)(2n+\alpha+\beta+4)},\\
\\
\hat{\vartheta}_{n}^{(\alpha, \beta)} &=& \frac{(n+\alpha+1)(n+\alpha+2)}{(2n+\alpha+\beta+
2)(2n+\alpha+\beta+3)}.
\end{eqnarray*}

\bigskip

\subsection{Classical Laguerre polynomials}

Univariate classical Laguerre polynomials are orthogonal polynomials associated with the inner product
$$
\langle f,g \rangle_{L} = \int_{0}^{+\infty} f(x)\, g(x)\,w^{(\alpha)}(x) \, dx,
$$
where 
$$
w^{(\alpha)}(x) = x^\alpha\,e^ {-x}, \qquad \alpha >-1.
$$
As usual, we denote by $\{L_n^{(\alpha)}\}_{n\ge0}$ the Laguerre polynomial sequence orthogonal 
with respect to $w^{(\alpha)}(x)$, normalized by the condition (formula (5.1.6), p. 101, \cite{Sz78})
$$
L_n^{(\alpha)}(0)=\binom{n+\alpha}{n}.
$$

\bigskip

\noindent
\textbf{Three term recurrence relation}

\begin{equation}\label{TTRR-L}
x\,L_n^{(\alpha)}(x) = a_{n}^{(\alpha)}\,L_{n+1}^{(\alpha)}(x) +
b_n^{(\alpha)}\,L_n^{(\alpha)}(x)+
c_{n}^{(\alpha)}\,L_{n-1}^{(\alpha)}(x),
\end{equation}
where
\begin{eqnarray*}
a_n^{(\alpha)} = -(n+1),\quad b_n^{(\alpha)} = 2n+\alpha+1,\quad c_n^{(\alpha)} = -(n+\alpha).
\end{eqnarray*}

\bigskip

\noindent
\textbf{Relation between adjacent families (I)} (\cite[(22.7.30), p. 783]{AS72})
\begin{equation}\label{ADJ7-L}
L_n^{(\alpha)}(x) = \delta_{n}^{(\alpha)}\,L_{n}^{(\alpha+2)}(x) +
\epsilon_{n}^{(\alpha)}\,L_{n-1}^{(\alpha+2)}(x)+
\zeta_{n}^{(\alpha)}\,L_{n-2}^{(\alpha+2)}(x),
\end{equation}
where
\begin{eqnarray*}
\delta_{n}^{(\alpha)} = 1,\quad \epsilon_{n}^{(\alpha)} = -2,\quad \zeta_{n}^{(\alpha)} = 1.
\end{eqnarray*}

\bigskip

\noindent
\textbf{Relation between adjacent families (II)} (\cite[(22.7.31), p. 783]{AS72})
\begin{equation}\label{ADJ8-L}
x^2\,L_n^{(\alpha+2)}(x) = \eta_{n}^{(\alpha)}\,L_{n+2}^{(\alpha)}(x) +
\theta_{n}^{(\alpha)}\,L_{n+1}^{(\alpha)}(x)+
\vartheta_{n}^{(\alpha)}\,L_{n}^{(\alpha)}(x),
\end{equation}
where
\begin{eqnarray*}
\eta_{n}^{(\alpha)} &=& (n+1)\,(n+2),\\
 \theta_{n}^{(\alpha)} &=& -2\,(n+\alpha+2)\,(n+1),\\ \vartheta_{n}^{(\alpha)} &=& (n+\alpha+1)\,(n+\alpha+2).
\end{eqnarray*}

\subsection{Bessel polynomials in one variable}

Following \cite{KF49}, univariate classical Bessel polynomials are orthogonal with respect to
the bilinear form
$$
\langle f,g \rangle_{B} = \int_{c}\, f(z)\, g(z)\,w^{(a,b)}(z) \, dz,
$$
where 
$$
w^{(a,b)}(z) =(2\pi i)^{-1}\,z^{a-2}\, e^{-b/z}, \quad a\neq 0, -1,-2, \ldots, \quad b\ne 0,
$$
and $c$ is the unit circle oriented in the counter--clockwise direction. The moment functional 
associated with Bessel polynomials is given by
$$\langle v^{(a,b)},x^n\rangle = \int_c\, z^n\,w^{(a,b)}(z) \, dz.
$$
In this case, $v^{(a,b)}$ is a non--positive definite moment functional.

Let us denote by $\{B_n^{(a,b)}\}_{n\ge0}$ the Bessel polynomial sequence orthogonal 
with respect to $w^{(a,b)}(z)$, normalized by the condition
$$
B_n^{(a,b)}(0)=1.
$$
Next, we recall some useful formulas for orthogonal Bessel polynomials. These expressions can be 
easily deduced from formulas in Part II of \cite{KF49}.

\bigskip

\noindent
\textbf{Explicit formulas for the first two cofficients} (\cite[(34), p. 108]{KF49})
\begin{equation}\label{besselcoef}
B_{n}^{(a,b)}(x) = \sum_{k=0}^n \,\binom{n}{k}\,(n+a-1)_k\, \left(\frac{x}{b}\right)^k = 
k_n^{(a,b)}\,x^n + l_n^{(a,b)}\,x^{n-1} + \cdots
\end{equation}
where 
$$ k_n^{(a,b)} = \frac{(n+a-1)_n}{b^n}, \qquad l_n^{(a,b)} = n\,\frac{(n+a-1)_{n-1}}{b^{n-1}}.$$

\bigskip

\noindent
\textbf{Normalizing factor} (\cite[(58), p. 113]{KF49})
\begin{equation}\label{normbessel}
h_{n}^{(a,b)}=\int_c B_{n}^{(a,b)}(z)^2\,w^{(a,b)}(z)\,dz = 
\frac{(-1)^{n+1}\,n!\,b}{(2n+a-1)\,(a)_{n-1}}.
\end{equation}

\bigskip

\noindent
\textbf{Three term recurrence relation} (\cite[(51), p. 111]{KF49})

\begin{equation}\label{TTRR-B}
x\,B_n^{(a,b)}(x) = a_{n}^{(a,b)}\,B_{n+1}^{(a,b)}(x) +
b_n^{(a,b)}\,B_n^{(a,b)}(x)+
c_{n}^{(a,b)}\,B_{n-1}^{(a,b)}(x),
\end{equation}
where
\begin{eqnarray*}
a_n^{(a,b)} &=& \frac{(n+a-1)\,b}{(2n+a-1)\,(2n+a)},\\
\\
b_n^{(a,b)} &=&-\frac{(a-2)\,b}{(2n+a-2)\,(2n+a)},\\
\\
c_n^{(a,b)} &=& -\frac{n\,b}{(2n+a-2)\,(2n+a-1)}.
\end{eqnarray*}

\bigskip

\noindent
\textbf{Relation between adjacent families (I)} (From \eqref{eq:recurrel1}, \eqref{besselcoef},
 \eqref{normbessel})
\begin{equation}\label{ADJ1-B}
B_n^{(a,b)}(x) = \delta_{n}^{(a,b)}\,B_{n}^{(a+2, b)}(x) +
\epsilon_{n}^{(a,b)}\,B_{n-1}^{(a+2,b)}(x)+
\zeta_{n}^{(a,b)}\,B_{n-2}^{(a+2, b)}(x),
\end{equation}
where
\begin{eqnarray*}
\delta_{n}^{(a,b)} &=& \frac{(n+a-1)\,(n+a)}{(2n+a-1)\,
 (2n+a)},\\
\\
\epsilon_{n}^{(a,b)} &=& \frac{2\,n\,(n+a-1)}{(2n+a-2)\,(2n+a)},\\
\\
\zeta_{n}^{(a,b)} &=& \frac{n\,(n-1)\,b^2}{(2n+a-2)\,(2n+a-1)
\,a\,(a+1)}.
\end{eqnarray*}

\bigskip

\noindent
\textbf{Relation between adjacent families (II)} (From \eqref{eq:recurrel2}, \eqref{besselcoef},
\eqref{normbessel})
\begin{equation}\label{ADJ2-B}
x^2\,B_n^{(a+2,b)}(x) = \eta_{n}^{(a,b)}\,B_{n+2}^{(a,b)}(x) +
\theta_{n}^{(a,b)}\,B_{n+1}^{(a,b)}(x)+
\vartheta_{n}^{(a,b)}\,B_{n}^{(a,b)}(x),
\end{equation}
where
\begin{eqnarray*}
\eta_{n}^{(a,b)} &=& \frac{b^2}{(2n+a+1)\,(2n+a+2)},\\
\\
\theta_{n}^{(a,b)} &=& -\frac{2\,a\,(a+1)\,(n+a)}{(2n+a)\,(2n+a+2)},\\
\\
\vartheta_{n}^{(a,b)} &=& \frac{a\,(a+1)}{(2n+a)
\,(2n+a+1)}.
\end{eqnarray*}

\bigskip

\end{document}